\newtheorem{proposition}{Proposition}
\newtheorem{lemma}{Lemma}
\newtheorem{theorem}{Theorem}
\newtheorem{corollary}{Corollary}
\newtheorem{claim}{Claim}
\theoremstyle{definition}
\newtheorem{definition}{Definition}
\theoremstyle{remark}
\newtheorem*{ack}{Acknowledgment}
\newcommand{\APPROXEQ}[0]{\trianglelefteq}
\newcommand{\EMPTYSEQ}[0]{[\ ]}
\newcommand{\minusdot}[0]{
  \mathrel{\vcenter{\offinterlineskip\ialign{%
      \hfil##\hfil\cr%
      % $\phantom{\dot{}}$\cr
      \noalign{\kern1ex}
      $-$\cr
      \noalign{\kern-.3ex}
      $\dot{}$\cr
    }}}
  }
\newcommand{\Multi}[1]{\vec{#1}}
\newcommand{\goedel}[1]{\lceil {#1} \rceil}
\newcommand{\divtwo}[1]{\lfloor \frac{#1}{2} \rfloor}
\newcommand{\COMPATIBLE}[0]{\bigtriangleup}
\DeclareMathOperator{\PIND}{PIND}
\DeclareMathOperator{\Env}{Env}
\DeclareMathOperator{\Fun}{Fun}
\DeclareMathOperator{\PV}{\textup{PV}}
\DeclareMathOperator{\Cond}{\textup{Cond}}
\DeclareMathOperator{\proj}{\textup{proj}}
\DeclareMathOperator{\Base}{\textup{Base}}
\DeclareMathOperator{\ar}{\textup{ar}}
\DeclareMathOperator{\Eq}{\textup{Eq}}
\newcommand{\CompSt}[3]{\langle {#1}, {#2} \rangle \downarrow {#3}}
\newcommand{\Bracket}[2]{\langle {#1}, {#2} \rangle}
\DeclareMathOperator{\dom}{\mathrm{dom}}
\newcommand{\emptyseq}{{\varepsilon}}
\DeclareMathOperator{\pair}{\mathrm{pair}}
\newcommand{\BLEN}[1]{\lvert {#1} \rvert}
\newcommand{\SIZE}[1]{\mathrm{size}({#1})}
\newcommand{\STEPS}[1]{\mathrm{nodes}({#1})}
\begin{document}

\title[Consistency proof of a fragment of $\PV$ with substitution]{Consistency proof of a fragment of $\PV$ with substitution in bounded arithmetic}

\author{Yoriyuki Yamagata}
\address{National Institute of Advanced Science and Technology (AIST) \\
1-8-31 Midorigaoka, Ikeda, Osaka 563-8577 Japan}
\urladdr{https://staff.aist.go.jp/yoriyuki.yamagata/en/}
\email{yoriyuki.yamagata@aist.go.jp}

\subjclass[2000]{03F30, 03D15}
\keywords{bounded arithmetic, consistency proof, computational complexity}

\begin{abstract}
  This paper presents proof that Buss's $S^2_2$ can prove the consistency of a fragment of Cook and Urquhart's $\PV$ from which induction has been removed but substitution has been retained.
  This result improves Beckmann's result, which proves the consistency of such a system without substitution in bounded arithmetic $S^1_2$.

  Our proof relies on the notion of ``computation'' of the terms of $\PV$.
  In our work, we first prove that, in the system under consideration, if an equation is proved and either its left- or right-hand side is computed, then there is a corresponding computation for its right- or left-hand side, respectively.
  By carefully computing the bound of the size of the computation, the proof of this theorem inside a bounded arithmetic is obtained, from which the consistency of the system is readily proven.

  This result apparently implies the separation of bounded arithmetic because Buss and Ignjatovi\'c stated that it is not possible to prove the consistency of a fragment of $\PV$ without induction but with substitution in Buss's $S^1_2$.
  However, their proof actually shows that it is not possible to prove the consistency of the system, which is obtained by the addition of propositional logic and other axioms to a system such as ours.
  On the other hand, the system that we have considered is strictly equational, which is a property on which our proof relies.
\end{abstract}

\maketitle

\section{Introduction}
\label{sec:intro}

Ever since Buss showed the relation between his hierarchy of bounded arithmetic, $S^i_2, i = 1, 2, \ldots$, and the polynomial time hierarchy of computational complexity \cite{Buss}, the question of whether his hierarchy collapses at some $i=n$ has become a central question in bounded arithmetic.
This is because the collapse of Buss's hierarchy implies the collapse of polynomial time hierarchy.

A classical way to prove the separation of theories is to use the second incompleteness theorem of G\"odel.
For example, if it is proved that $S_2$ proves the consistency of $S^1_2$, $S^1_2 \not= S_2$ is obtained, because $S^1_2$ cannot prove its own consistency.

Wilkie and Paris showed that $S_2$ cannot prove the consistency of Robinson arithmetic $Q$ \cite{Wilkie1987Scheme}, which is a much weaker system.
Although this result stems more from the free use of unbounded quantifiers than from the power of arithmetic, Pudl\'ak showed that $S_2$ cannot prove the consistency of bounded proofs (proofs in which the formulas only have bounded quantifiers) of $S^1_2$ \cite{PudlakNote}.
The result was refined by Takeuti \cite{takeuti1990some}, as well as by Buss and Ignjatovi\'c \cite{BussUnprovability}, who showed that, even if induction is removed from $S^1_2$, $S_2$ is still not able to prove the consistency of its bounded proofs.

Thus, it will be interesting to delineate theories that can be proven to be consistent in $S_2$ and $S^1_2$ in order to find a theory $T$ that can be proven to be consistent in $S_2$ but not in $S^1_2$.
In particular, we focus on Cook and Urquhart's system $\PV$~\cite{cook1975feasibly}, which is essentially an equational version of $S^1_2$.
Buss and Ignjatovi\'c stated that $\PV$ cannot prove the consistency of $\PV^-$, a system based on $\PV$ from which induction has been removed but substitution is retained.
On the other hand, Beckmann \cite{Beckmann2002} later proved that $S^1_2$ can prove the consistency of a theory obtained from $\PV^-$ by removing the substitution rule.

This paper presents proof that $S^2_2$ is capable of proving the consistency of purely equational $\PV^-$, in which proofs are formulated as trees.
This result apparently implies that $S^1_2 \subsetneq S^2_2$ is based on the result of Buss and Ignjatovi\'c.
However, their proof actually shows that $\PV$ cannot prove the consistency of the extension of $\PV^-$ that contains propositional logic and $BASIC^e$ axioms.
On the other hand, our $\PV^-$ is strictly equational, which is a property on which our proof relies.
Although Buss and Ignjatovi\'c stated that their proof can be extended to purely equational $\PV^-$, there is a gap in their reasoning.
We discuss this in Section \ref{sec:buss-ignja} in detail.

The consistency of $\PV^-$ can be proven by using the following strategy.
Beckmann used a rewriting system to prove the consistency of $\PV^-$ by excluding the substitution rule.
According to the terminology of programming language theory, the use of a rewriting system to define the evaluation of terms is known as \emph{small-step semantics} (referred to as \emph{structural operational semantics} in \cite{Plotkin1981}).

There is an alternative approach toward obtaining the abovementioned definition, namely, \emph{big-step semantics} (referred to as \emph{natural semantics} in \cite{Kahn1987}).
In big-step semantics, the relation $\CompSt{t}{\rho}{v}$, where $t$ is a term, $\rho$ is an assignment to free variables in $t$, and $v$ is the value of $t$ under assignment $\rho$, is defined.
We treat $\CompSt{t}{\rho}{v}$ as a statement in a derivation and provide rules for deriving $\CompSt{t}{\rho}{v}$.
For technical reasons, it is assumed that such derivations are directed acyclic graphs (DAGs) in this paper.

However, it is still not possible to prove the induction step for the substitution rule, because bounded arithmetic cannot prove the existence of a value for each term of $\PV$.
We overcome this difficulty by allowing an approximate value of a computation, in a way similar to that described in Beckmann's paper \cite{Beckmann2002}.

Then, we attempt to prove that $\CompSt{t}{\rho}{v}$ implies that $\CompSt{u}{\rho}{v}$ for any given assignment $\rho$ by induction on the construction of the proof $\chi$ of $t = u$.
We call this fact soundness (with respect to our computational semantics).
It is possible to set bounds for all quantifiers that appear in the induction hypothesis of this induction by setting a bound on the G\"odel number of $\rho$ and bounds on the G\"odel numbers of the derivation of $\CompSt{t}{\rho}{v}$ and $\CompSt{u}{\rho}{v}$.
Because induction is carried out on bounded formulas, the proof can be carried out inside $S^i_2$ for some $i$.
Let the number of primitive symbols in $a$ $\SIZE{a}$.
We can show that $\SIZE{\rho}$ is polynomially bounded by $\SIZE{\chi}$.

The bounds for the derivations are more difficult to obtain.
Although it is possible to bound the number of \emph{nodes} in the above-mentioned derivations, bounds for the \emph{G\"odel numbers} of these derivations are not trivially obtained, because there are no (obvious) bounds for the terms that appear in the derivations.
This difficulty is overcome by employing the \emph{call-by-value} style of big-step semantics, in which a derivation has the form
\begin{equation}
  \infer{\CompSt{f(\Multi{t})}{\rho}{v}.}{
    \CompSt{f_1(\Multi{x})}{\nu_1}{w_1}, &
    \ldots &
    \CompSt{f_k(\Multi{x},y_1, \ldots, y_{l-1})}{\nu_l}{w_l},&
    (\CompSt{t_i}{\rho}{v_i})_{i = 1, \ldots, m}
    }
\end{equation}
where $\nu_j$ denotes the environment that maps $x_i$ to $v_i$ and $y_k$ to $w_k$ for $i = 1, \ldots, m$ and $k = 1, \ldots, j-1$.
$m$ is the number of the arguments of $f$.
Because the numbers of symbols in $t_1, \dots, t_m$ and $f_1, \ldots, f_l$ are bounded by $\SIZE{f(\Multi{t})}$, and the size of the values appearing in the derivation can be proven to be polynomially bounded by the number of nodes in the derivation and the size of conclusions, the size of the terms that appear in this derivation can be polynomially bounded by the number of nodes and size of the conclusions of the derivation.
Thus, all the quantifiers in the induction hypothesis are bounded by the G\"odel number of $\chi$.

The part of the induction step that is most difficult to prove is the soundness of the substitution rule.
The proof is divided into two parts.
First, it is proven that if $\sigma$ derives $\CompSt{t_1[u/x]}{\rho}{v_1}, \ldots, \CompSt{t_n[u/x]}{\rho}{v_n}$ and contains a computation of $\CompSt{t}{\rho}{v}$, then there exists $\tau$ that derives $\CompSt{t_1}{\rho[x \mapsto v]}{v_1}, \ldots, \CompSt{t_n}{\rho[x \mapsto v]}{v_n}$ (Substitution I).
Next, it is proven that if $\sigma$ derives $\CompSt{t_1}{\rho[x \mapsto v]}{v_1}, \ldots, \CompSt{t_n}{\rho[x \mapsto v]}{v_n}$ and contains a computation of $\CompSt{t}{\rho}{v}$, then there exists $\tau$ that derives $\CompSt{t_1[u/x]}{\rho}{v_1}, \ldots, \CompSt{t_n[u/x]}{\rho}{v_n}$ (Substitution II).

The intuition underlying the proof of Substitution I is explained as follows.
The na\"ive method, which uses induction on the length of $\sigma$, is ineffective.
This is because an assumption of the last inference of $\sigma$ may be used as an assumption of another inference; thus, it may not be a conclusion of $\sigma_1$, which is obtained from $\sigma$ by removing the last inference.
Therefore, it is not possible to apply the induction hypothesis to $\sigma_1$.
To transform all the assumptions into conclusions, it is necessary to increase the length of $\sigma_1$ from $\sigma$ by duplicating the inferences from which the assumptions are derived.
Therefore, induction cannot be used on the length of $\sigma$.

Instead, we use induction on $\SIZE{t_1[\emptyseq/x]} + \cdots + \SIZE{t_n[\emptyseq/x]}$ where $\emptyseq$ is a constant symbol.
Then, we prove that for all $\STEPS{\sigma} \leq U - \SIZE{t_1[\emptyseq/x]} - \cdots - \SIZE{t_n[\emptyseq/x]}$, where $U$ is a large integer that is fixed during the proof of soundness, we have $\tau$, which derives $\CompSt{t_1}{\rho[x \mapsto w]}{v_1}, \ldots, \CompSt{t_n}{\rho[x \mapsto w]}{v_n}$ and satisfies $\STEPS{\tau} \leq \STEPS{\sigma} + \SIZE{t_1[\emptyseq/x]} + \cdots + \SIZE{t_n[\emptyseq/x]}$ where $w$ is a value of $u$.
Because all the quantifiers are bounded, the proof can be carried out in $S_2$, in particular $S^2_2$.

This paper is a revised version of the paper titled ``Consistency proof of a feasible arithmetic inside a bounded arithmetic,'' ~\cite{Yamagata2014} which was posted to ArXiv.
It is revised from two aspects.
First, it addresses the problem in the proof that causes Beckmann's counter-example.
Second, it strengthens the meta-theory from $S^1_2$ to $S^2_2$, which is used to prove consistency.
$S^2_2$ is necessary to prove the soundness of transitivity and substitution rules
We discuss this point in Section \ref{subsec:meta}.

The remainder of this paper is organized as follows.
Section \ref{sec:pre} summarizes the preliminaries.
Section \ref{sec:PV} introduces $\PV$ and $\PV^-$, which is the target of our consistency proof.
Section \ref{sec:comp} introduces the notion of \emph{(approximate) computation}.
Section \ref{sec:estimate} shows that for each computation $\sigma$, $\SIZE{\sigma}$ is polynomially bounded by the number of nodes in $\sigma$ and the number of primitive symbols in the conclusion of $\sigma$.
Section \ref{sec:basics} presents technical lemmas that are used in the consistency proof.
Section \ref{sec:consistency} presents the proofs of the consistency of $\PV^-$.
Finally, Section \ref{sec:discuss} concludes the paper with a brief discussion.

\section{Preliminary}\label{sec:pre}

The sequence $a_1, a_2, \ldots, a_n$ is often abbreviated as $\Multi{a}$.
If we treat the sequence $a_1, a_2, \ldots, a_n$ as a single object, we write $[a_1, a_2, \ldots, a_n]$.
For each sequence $a$, $(a)_i$ is its $i$-th element.
We denote an empty sequence by $\EMPTYSEQ$ in the meta-language.
For integer $n$, $|n|$ denotes its length of binary representation.
For a set $A$ of integers, $\sum A$ denotes the sum of all members of $A$.

Many types of objects are considered as proofs of $\PV$, terms of $\PV$, or the \emph{computation} of these terms, all of which require the assignment of G\"odel numbers to them.
As all the objects under consideration can be coded as finite sequences of primitive symbols, it will suffice to encode these sequences of symbols.
Variables $x_1, x_2, \ldots$ are encoded by variable names $x$ and natural numbers $1, 2, \ldots$, which can be represented by binary strings.
Function symbols for all polynomial time functions are encoded using trees of the primitive functions and labels that show how the function is derived using Cobham's inductive definition of polynomial time functions.
Thus, the symbols that are used in our systems are finite, which enables us to use the numbers $0, \ldots, N$ to code these symbols.
Then, the sequence of symbols is coded as $N+1$-adic numbers.

For each object $a$ consisting of symbols, $\SIZE{a}$ denotes the number of primitive symbols in $a$, that is, the number of $N+1$-adic numbers in its G\"odel number.
If $a$ is a sequence or tree in an object language, $\STEPS{a}$ denotes the number of nodes in $a$.

We use the notation $a \equiv b$ when $a$ and $b$ are syntactically equivalent.

For a given term $t$, the notion of subterm $u$ is defined in the usual way.
Further, $u$ may be identical to $t$.
If $t \not\equiv u$, we call $u$ a proper subterm.

\section{PV and related systems}\label{sec:PV}

In this section, we introduce our version of $\PV$ and $\PV^-$.

$\PV$ is formulated as a theory of binary strings rather than integers.
We identify binary strings and integers which are represented in little endian (the least significant bit appears at the right most position).
The differences between our version of $\PV$ and the original $\PV$ are discussed in Section \ref{subsec:original-pv}.

$\PV$ provides the symbols for the empty sequence $\emptyseq$ and its binary successors $0, 1$ denoted by $b, b_1, \ldots$, which add 0 or 1 to the leftmost positions of strings.
If a term is solely constructed by $\emptyseq, 0, 1$, it is referred to as a \emph{numeral}.
Although binary successors are functions, the notation we use for them employs a special convention to omit the parentheses after the function symbol.
Thus, we write $01x$ instead of $0(1(x))$.

The language of $\PV$ contains function symbols for all polynomial time functions.
In particular, it contains the constant $\emptyseq$, binary successors $0, 1$ and $\emptyseq^n$, $\proj^i_n$.
The intuitive meaning is that $\emptyseq^n$ is the $n$-ary constant function whose value is $\emptyseq$, and $\proj^k_n$ is the projection function.
From here, a function symbol for a polynomial time function $f$ and $f$ itself are often identified.
The terms are denoted by $t, t_1, \ldots, u, r, s, \ldots$

For each function symbol $f$ of a polynomial time function, let $\Base(f)$ be the set of function symbols that are used in Cobham's recursive definition of $f$.
We assume that $\Base(f)$ always contains $\emptyseq$, $0$, and $1$, regardless of $f$.
For a set of function symbols $S$, we define $\Base(S) = \bigcup_{f \in S} \Base(f)$.
If $\alpha$ represents any sequence of symbols, $\Base(\alpha)$ is defined by the union of $\Base(f)$ for the function symbols $f$ that appear in $\alpha$.
$\Base(f)$ is computable by a polynomial time function.
For each function symbol $f$, $\ar(f)$ is the arity of $f$.
We encode $f \equiv \emptyseq, 0, 1, \emptyseq^n, \proj^i_n$ by
\begin{align}
  \goedel{\emptyseq} &= \goedel{[\Fun, \emptyseq]}\\
  \goedel{0} &= \goedel{[\Fun, 0]}\\
  \goedel{1} &= \goedel{[\Fun, 1]}\\
  \goedel{\emptyseq^n} &= \goedel{[\Fun, \emptyseq, \overbrace{\# \cdots \#}^n]}\\
  \goedel{\proj^i_n} &= \goedel{[\Fun, \text{proj}, i, \overbrace{\# \cdots \#}^n]}
\end{align}
where $\#$ is a ``filler'' symbol.
Then, a function defined by the composition of $g$ and $h_1, \ldots, h_m$ is encoded as $[\Fun, \mathrm{comp}, g, h_1, \ldots, h_n]$.
A function defined by the recurrence of $g_\emptyseq, g_0, g_1$ is encoded as $[\Fun, \mathrm{rec}, g_\emptyseq, g_0, g_1]$.
Then, for any function symbols $f$, which are defined by Cobham's inductive definition, $\ar(f) \leq \SIZE{f}$ is satisfied.

The only predicate in the vocabulary of $\PV$ is the equality $=$.
Our $\PV$ does not have inequalities $\leq, \geq, \ldots$.
The formulas $t_1 = t_2$ of $\PV$ are formed by connecting two terms $t_1, t_2$ by the equality $=$.
We consider $\PV$ to be purely equational; hence, the formulas do not contain propositional connectives and quantifiers.

There are three types of axioms and inferences in $\PV$: defining axioms, equality axioms, and induction.
We consider that the proofs in $\PV$ are all tree-like, not DAG-like.
This restriction to the representation of proofs is essential to our consistency proof.

\subsection{Defining axioms}\label{subsec:defn}
For all of Cobham's defining equations of polynomial time functions, there are corresponding defining axioms in $\PV$.
For the constant function $\emptyseq^n$, the defining axiom is
\begin{equation}
  \emptyseq^n(x_1, \ldots, x_n) =  \emptyseq
\end{equation}
for a positive integer $n$.
For the projection function, the defining axiom is
\begin{equation}
  \proj^i_n(x_1, \ldots, x_n) = x_i
\end{equation}
for a positive integer $n$ and an integer $i, 1 \leq i \leq n$.
For the binary successor functions $0$ and $1$, there is no defining axiom.
If the function $f(x_1, \ldots, x_n)$ is defined by the composition of $g$ and $h_1, \ldots, h_m$, the defining axiom is
\begin{equation}
  f(x_1, \ldots, x_n) = g(h_1(x_1, \ldots, x_n), \ldots, h_m(x_1, \ldots, x_n)).
\end{equation}
For the function defined by recursion of binary strings, there are three defining axioms:
\begin{align}
  f(\emptyseq, x_1, \cdots, x_n) &= g_\emptyseq(x_1, \cdots, x_n)\\
  f(0x, x_1, \cdots, x_n) &= g_0(x, f(x, \Multi{x}), x_1, \cdots, x_n)\\
  f(1x, x_1, \cdots, x_n) &= g_1(x, f(x, \Multi{x}), x_1, \cdots, x_n).
\end{align}
Using Cobham's recursive definition of polynomial time functions, it is easy to see that all polynomial time functions can be defined using these defining axioms.
Even though Cook and Urquhart's $\PV$~\cite{cook1975feasibly} requires all recursion schema to be bounded by a function with a polynomial growth rate, we do not impose this restriction.
Thus, our theory can be extended beyond polynomial time functions.
However, this paper focuses on the theory based on polynomial time functions.

We present defining axioms of forms $f(\Multi{x}) = t$, but we also introduce defining axioms of forms $t = f(\Multi{x})$.

\subsection{Equality axioms}\label{subsec:eq}
The identity axiom is formulated as
\begin{equation}
  t = t
\end{equation}

The remaining equality axioms are formulated as inference rules rather than axioms.
\begin{equation}
  \infer{t = u}{u = t}
\end{equation}
\begin{equation}
  \infer{t = r}{t = u & u = r}
\end{equation}
\begin{equation}
  \infer{f(t_1, \ldots, t_n) = f(u_1, \ldots, u_n)}{
    t_1 = u_1 & \cdots & t_n = u_n
  }
\end{equation}
\begin{equation}
  \infer{t(r) = u(r)}{t(x) = u(x)}
\end{equation}
for any term $r$.

\subsection{Induction}\label{subsec:ind}
\begin{equation}
  \infer{t_1(x) = t_2(x)}{
    t_1(\emptyseq) = t_2(\emptyseq) &
    t_1(s_ix) = v_i(t_1(x)) &
    t_2(s_ix) = v_i(t_2(x)) & (i = 0, 1)
  }
\end{equation}
The system $\PV$ contains defining axioms, equality axioms, and induction as axioms and inference rules.
In contrast, the system $\PV^-$ contains only defining axioms and equality axioms as axioms and inference rules.
This paper demonstrates that the consistency of $\PV^-$ can be proven by $S^2_2$.

\section{Approximate computation}\label{sec:comp}

In this section, we define the notion of \emph{approximate computations} as being the representation of the evaluations of the terms of $\PV$.
The idea that the computation values can be approximated using the $*$ symbol comes from Beckmann~\cite{Beckmann2002} but we only allow $*$ contained in numerals.

\begin{definition}[Approximate values]
  Let $\mathbb D$ be terms that are created by $0, 1$ from constants $\emptyseq$, $*$.
  $*$ stands for the \emph{unknown} value.
  The elements of $\mathbb D$ are called \emph{g-numerals}.
  For $v \in \mathbb D$, $\STEPS{v}$ is defined as the number of symbols $*, \emptyseq, 0, 1$.
  $\mathbb D$ has an order structure.
  For any $v, w \in \mathbb D$, let $\APPROXEQ$ be the relation that is recursively defined by
  \begin{enumerate}
     \item $\emptyseq \APPROXEQ \emptyseq$
     \item $v \APPROXEQ *$
     \item $v \APPROXEQ w \implies bv \APPROXEQ bw$ for $i = 0, 1$.
  \end{enumerate}
  If $v \APPROXEQ w$, $w$ is often written as $v^*$.
\end{definition}

\begin{lemma}[$S^1_2$]\label{lem:order}
  $\APPROXEQ$ in $\mathbb D$ is the order relation.
\end{lemma}

\begin{proof}
  \emph{Transitivity law} : we prove that f $v \APPROXEQ w$ and $w \APPROXEQ z$, $v \APPROXEQ z$ by induction on $\STEPS{v}+\STEPS{w}+\STEPS{z}$.
  If $z \equiv *$ then the conclusion follows.
  If $v \equiv *$ then $w \equiv *$ and $z \equiv *$ must hold.
  Therefore, $v \APPROXEQ z$.
  Next, if $z \equiv \emptyseq$, then $v \equiv w \equiv \emptyseq$.
  Thus, $v \APPROXEQ z$.
  If $v \equiv \emptyseq$ then $z$ must be either $*$ or $\emptyseq$.
  For both cases, $v \APPROXEQ z$.
  Finally, $v \equiv b_1v'$, $w \equiv b_2w'$ and $z \equiv b_3z'$.
  Then, $b_1 \equiv b_2 \equiv b_3$, $v' \APPROXEQ w'$ and $z \APPROXEQ z'$ hold.
  By induction hypothesis, $v' \APPROXEQ z'$ is therefore $v \APPROXEQ z$.
  Other cases are trivial.

  \emph{Anti-symmetry law} : we prove f $v \APPROXEQ w$ and $w \APPROXEQ v$, $v \equiv w$ by induction on $\STEPS{v}+\STEPS{w}$.
  If $v \equiv *$, then $w$ must be $*$; therefore, $v \equiv w$.
  Similarly, if $v \equiv \emptyseq$, then $w$ must be $\emptyseq$; therefore, $v \equiv w$.
  By a symmetric argument, we can assume that $v$ and $w$ are neither $*$ nor $\emptyseq$.
  Then, $v \equiv bv'$ and $w \equiv bw'$.
  $v' \APPROXEQ w'$ and $w' \APPROXEQ v'$ must hold.
  Therefore, $v' \equiv w'$.
  Thus, $v \equiv w'$.
\end{proof}

\begin{definition}
  Let $v_1, \ldots, v_n$ be g-numerals and $t$ be a term of $\PV^-$ with free variables $x_1, \ldots, x_n$.
  An \emph{environment} $\rho$ of $t$ is a map from $x_1, \ldots, x_n$ to $v_1, \ldots, v_n$ respectively.
  Let $\dom(\rho)$ be $\{x_1, \ldots, x_n\}$.
  Let
  \begin{equation}
    B(\rho) = \max_{i = 1}^m \STEPS{\rho(x_i)},
  \end{equation}
  $L(\rho) = n$ and $S(\rho) = \SIZE{\rho_i}$.
  The empty environment is denoted by $\EMPTYSEQ$.

  Let $v$ be a g-numeral, $t$ be a term of $\PV^-$, and $\rho$ be an environment of $t$.
  The form $\CompSt{t}{\rho}{v}$ is referred to as a \emph{(computation) judgment}, $t$ as the main term, $\rho$ as the environment, and $v$ as the value (of $t$).
  Because we allow approximate computations, a term $t$ may have several g-numerals as values under the same environment.

  If a computational judgment $\CompSt{t}{\rho}{v}$ has a form $\CompSt{f(x_1, \ldots, x_n)}{\rho}{w}$ or $\CompSt{v}{\rho}{w}$ where $v$ is a numeral, then it is called \emph{purely numerical}.
  An inference of a computational judgment is also called purely numerical if its conclusion and premises are purely numerical.
\end{definition}
A computation judgment can be derived using the following rules.
Each rule is attached by a symbol such as $*$ called a \emph{label}.

In the following rules, on the contrary to the case of terms $t(t_1, \ldots, t_n)$, $f(t_1, \ldots, t_n)$ for any function symbol $f$ means that $t_1, \ldots, t_n$ really appears in $f(t_1, \ldots, t_n)$.

  \begin{equation}\label{eq:comp-*}
    \infer[*]{\CompSt{t}{\rho}{*}}{
    }
  \end{equation}
  for any term $t$.

  \begin{equation}
    \label{eq:env}
    \infer[\Env]{\CompSt{x}{\rho[x \mapsto v]}{v^*}}{
    }
  \end{equation}
  where $v \APPROXEQ v^*$.

  \begin{equation}\label{eq:comp-0}
    \infer[v]{\CompSt{v}{\rho}{v^*}}{
    }
  \end{equation}
  where $v$ is a numeral and $v^*$ is an approximation of $v$.

  \begin{equation}\label{eq:comp-succ}
    \infer[b]{\CompSt{bt}{\rho}{bv^*}}{
      \CompSt{t}{\rho}{v}}
  \end{equation}
  where $b$ is either $0$ or $1$, $v^*$ is an approximation of $v$, $v$ is a g-numeral and $t$ is not a numeral.

  \begin{equation}\label{eq:comp-0-fun}
    \infer[\emptyseq^m]{\CompSt{\emptyseq^m(t_1, \ldots, t_m)}{\rho}{\emptyseq}}{
      (\CompSt{t_i}{\rho}{v_i})_{i=1, \ldots, m}
    }
  \end{equation}
  where $\emptyseq^m$ is the $m$-ary constant function of which the value is
  always $\emptyseq$.
  $(\CompSt{t_i}{\rho}{v_i})_{i=1, \ldots, m}$ is the sequence $\CompSt{t_{i_1}}{\rho}{v_1}, \ldots, \CompSt{t_{i_k}}{\rho}{v_m}$ of judgments.
  We use the similar notation from here.

  \begin{equation}\label{eq:comp-proj}
    \infer[\proj^i_m]{\CompSt{\proj_m^i(t_1, \ldots, t_m)}{\rho}{v^*_i}}{
      (\CompSt{t_j}{\rho}{v_j})_{j = 1, \ldots, m}
    }
  \end{equation}
  for $i = 1, \cdots, m$.
  $v^*_i$ is an approximation of $v_i$.

  If $f$ is defined by composition, we have the following rule.
  \begin{equation}\label{eq:comp-comp}
    \infer[\text{comp}]{\CompSt{f(t_1, \ldots, t_n)}{\rho}{z^*}}{
      \CompSt{g(\Multi{y}}{\xi}{z}
      &
      \CompSt{h_1(\Multi{x})}{\nu}{w_1}
       \cdots
      \CompSt{h_m(\Multi{x})}{\nu}{w_m} &
      (\CompSt{t_i}{\rho}{v_i})_{i = 1, \ldots, n}
    }
  \end{equation}
  where $\Multi{y} = y_1, \ldots, y_m$, $\Multi{x} = x_1, \ldots, x_n$, $\nu(x_i) = v_i, i = 1, \ldots, n$ and $\xi(y_j) = w_j, j = 1, \ldots, m$.

  If $f$ is defined by recursion, we have the following rules.
  \begin{equation}\label{eq:comp-rec-0}
    \infer[\text{rec-}\emptyseq]{\CompSt{ f(t, t_1, \ldots, t_n)}{ \rho }{z^*}}{
      \CompSt{ g_\emptyseq(x_1, \ldots, x_n)}{ \xi }{z}
      &
      \CompSt{ t}{ \rho }{\emptyseq}
      &
      (\CompSt{t_i}{\rho}{v_i})_{i = 1, \ldots, n}
    }
  \end{equation}
  where $\xi(x_i) = v_i$ for $i = 1, \ldots, n$.
  \begin{equation}
  \label{eq:comp-rec-succ}
    \infer[\text{rec-}b]{\CompSt{ f(t, t_1, \ldots, t_n)}{ \rho }{z^*}}{
      \CompSt{g_i(x_0, y, \Multi{x})}{\xi}{z}
      &
      \CompSt{t}{\rho}{iv_0}
      &
      \CompSt{f(x_0, \Multi{x})}{\nu}{w}
      &
      (\CompSt{t_j}{\rho}{v_j})_{j = 1, \ldots, n}
    }
  \end{equation}
  where $b = 0, 1$ and $\Multi{x} = x_1, \ldots, x_n$.
  The environment $\nu$ is defined by $\nu(x_j) = v_j$ for $j = 1, \ldots, n$ and $\nu(x_0) = v_0$, while $\xi$ is defined by $\xi(x_j) = v_j, \xi(x_0) = v_0, \xi(y) = w$.

\begin{definition}[Computation Sequence]\label{defn:comp-seq}
  A \emph{computation sequence} $\sigma$ is a sequence $\sigma_1, \ldots \sigma_L$, where each $\sigma_i$ is a sequence
  \begin{equation}
    [R, \CompSt{t_i}{\rho_i}{v_i}, n_{1i}, \ldots ,n_{{l_i}i}]
  \end{equation}
  which satisfies $n_{ji} < i, j = 1, \ldots, l_i$.
  Each inference
  \begin{equation}
    \infer[(\sigma_i)_1]{(\sigma_i)_2}{
      (\sigma_{n_{1i}})_2 &
      \cdots
      &
    (\sigma_{n_{{l_i}i}})_2
    }
  \end{equation}
  must be a valid computation rule. Here, $(a)_i$ is a projection of $[a_1, \ldots, a_n]$ to $a_i$.
  The computation judgments that are not used as assumptions of some inference rule are referred to as \emph{conclusions} of $\sigma$.
  If $\CompSt{t}{\rho}{v}$ is the only conclusion of $\sigma$, it is written as $\sigma \vdash \CompSt{ t}{ \rho }{v}$; however, if $\sigma$ has multiple conclusions $\Multi{\alpha}$, it is written as $\sigma \vdash \Multi{\alpha}$.
  If $\sigma \vdash \CompSt{t}{\rho}{v}, \Multi{\alpha}$, $\sigma$ is often considered to be a computation of $\CompSt{ t}{ \rho }{v}$.
  Although a computation sequence $\sigma$ is a sequence, $\sigma$ is often considered to be a DAG, of which the conclusions form the lowest elements.

  If there is a computation sequence $\sigma$ with conclusions $\CompSt{t}{\rho}{v}, \Multi{\alpha}$ such that $\STEPS{\sigma} \leq b$, we write $\vdash_b \CompSt{t}{\rho}{v}, \Multi{\alpha}$.

  For any sequence of computational judgments $\Multi{\alpha} = \CompSt{t_1}{\rho_1}{v_1}, \ldots, \CompSt{t_n}{\rho_n}{v_n}$, $T(\Multi{\alpha}) = \max \{\SIZE{t_1}, \ldots, \SIZE{t_n}\}$.
  For a computation $\sigma$, $M(\sigma)$ is defined as the maximal size of the main terms of computational judgments in $\sigma$, and $T(\sigma) = T(\Multi{\alpha})$ if $\Multi{\alpha}$ are conclusions of $\sigma$.
  For computational judgments $\alpha$ above, $B(\alpha)$, $L(\alpha)$, and $S(\alpha)$ are defined by $\max^n_{i = 1} B(\rho_i), \max^n_{i = 1} L(\rho_i)$ and $\max^n_{i = 1} S(\rho_i)$ respectively.
  For a computation $\sigma$ with the conclusion $\alpha$, $B(\sigma) = B(\alpha)$, $L(\sigma) = L(\alpha)$ and $S(\sigma) = S(\alpha)$.
  \end{definition}

We would like to show that $\vdash_{|B|} \CompSt{t}{\rho}{v}, \Multi{\alpha}$ is definable in $S^1_2$.
The obstacle to do this is that, in the above definition, only the number of nodes of $\sigma$, and not the number of primitive symbols, is bounded.
Thus, it is required to bound polynomially $\SIZE{\sigma}$ by $\STEPS{\sigma}$.
This task is carried out in Section \ref{sec:estimate}.

\section{Estimating the size of a computation}\label{sec:estimate}

This section proves the polynomial upper bound of the size of a computation with respect to the number of nodes of the computation together with the size of its conclusions.
($S^1_2$) means that a statement is provable in the theory $S^1_2$, and ($S^2_2$) means that a statement is provable in the $S^2_2$.

\begin{lemma}[$S^1_2$]\label{lem:f}
  Let $\sigma$ be a computation of $\CompSt{t_1}{\rho_1}{v_1}, \ldots, \CompSt{t_m}{\rho_m}{v_m}$.
  Then, $\Base(t_1, \ldots, t_m)$ contains all function symbols that appear in the main terms of $\sigma$.
  If $f(x_1, \ldots, x_{n})$ is a main term that appears in $\sigma$, $\SIZE{f} \leq T(\sigma)$ and
  \begin{align}
    \SIZE{f(x_1, \ldots, x_{n})} &\leq T(\sigma) + 2 + \sum_{i=1}^n \SIZE{x_i}\\
    &\leq p_M(T(\sigma)) \label{eq:p_M}
  \end{align}
    for a polynomial $p_M$.
\end{lemma}

\begin{proof}
  The first half of the lemma is proven by induction on $\sigma$.
  Because $f$ is contained in $\Base(t_1, \ldots, t_m)$, $\SIZE{f} \leq T(\sigma)$.
  $\SIZE{x_i}$ is polynomially increased by $\SIZE{i}$, as $x_i$ is a compound symbol constructed from the symbol $x$ and $i$.
  Because $i \leq n \leq \ar(f) \leq \SIZE{f} \leq T(\sigma)$, $\SIZE{i} \leq T(\sigma)$.
  Thus, there is a polynomial $p_M$ that satisfies \eqref{eq:p_M}.
\end{proof}

\begin{lemma}[$S^1_2$]\label{lem:v-num}
  Let $v$ is a g-numeral that appears as a value in $\sigma$.
  Then, $\STEPS{v} \leq \max \{ B(\sigma),  T(\sigma) \} + \STEPS \sigma$.
\end{lemma}

To prove this lemma, we define a wighted directed graph $G_\sigma$ and prove related lemmas.
\begin{definition}\label{defn:G_sigma}
  In a computational judgement $\CompSt{t}{\rho}{v}$, we call $v$ the righthand and $\Bracket{t}{\rho}$ the lefthand.
  The nodes of the weighted directed graph $G_\sigma$ consist of all righthands and lefthands of computational judgements of $\sigma$.
  For each node $\eta$, we define an integer $N(\eta)$ as $\STEPS{v}$ if $\eta$ is a value $v$, and as $\max(N(t), B(\rho))$, where $N(t)$ is defined as the maximal $\STEPS{v}$ of numerals $v$ which are contained in $t$, if $\eta$ is the lefthand $\Bracket{t}{\rho}$.
  For each inference of $\sigma$, edges of $G_\sigma$ are defined as follows.

  \begin{equation}
    \infer[*]{\CompSt{t}{\rho}{*}}{
    }
  \end{equation}
  In this case, we connect a edge from $t$ to $*$.  The weight is 0.

  \begin{equation}
    \infer[\Env]{\CompSt{x}{\rho[x \mapsto v]}{v^*}}{
    }
  \end{equation}
  In this case, we connect a edge from $\Bracket{x}{\rho[x \mapsto v]}$ to the
  the righthand side $v^*$.
  The weights of the edge are 0.

  \begin{equation}
    \infer[v]{\CompSt{v}{\rho}{v^*}}{
    }
  \end{equation}
  In this case, we connect a edge from $v$ to $v^*$.  The weight is 0.

  \begin{equation}
    \infer[b]{\CompSt{bt}{\rho}{bv^*}}{
      \CompSt{t}{\rho}{v}}
  \end{equation}
  In this case, we connect the lefthand of the lower judgement to the lefthand of the upper judgement and the righthand of the upper judgement to the righthand of the lower judgement.
  The weight is 0 for the edge which connects the lefthand and 1 for the edge which connects the righthand.

  \begin{equation}
    \infer[\emptyseq^n]{\CompSt{\emptyseq^n(t_1, \ldots, t_n)}{\rho}{\emptyseq}}{
      (\CompSt{t_i}{\rho}{v_i})_{i=1, \ldots, n}
    }
  \end{equation}
  In this case, we connect the lefthand of the conclusion to all lefthands of the premises.
  The weights are all 0.  $\emptyseq$ is connected from its lefthand side of the judgement.  The weight is 0.

  \begin{equation}
    \infer[\proj^i_n]{\CompSt{\proj_n^i(t_1, \ldots, t_n)}{\rho}{v^*_i}}{
      (\CompSt{t_j}{\rho}{v_j})_{j = 1, \ldots, n}
    }
  \end{equation}
  In this case, we connect the lefthand of the conclusion to all lefthands of the premises, and $v_i$ in the premise to $v^*_i$ in the conclusion.
  The weights are all 0.

  \begin{equation}
    \infer[\textup{comp}]{\CompSt{f(t_1, \ldots, t_n)}{\rho}{z}}{
      \CompSt{g(\Multi{y})}{\mu}{z}
      &
      \CompSt{h_1(\Multi{x})}{\nu}{w_1}
       \cdots
      \CompSt{h_m(\Multi{x})}{\nu}{w_m} &
      (\CompSt{t_i}{\rho}{v_i})_{i=1, \ldots, n}
    }
  \end{equation}
  In this case, we connect the lefthand of the conclusion to all $\Bracket{t_i}{\rho}, i = 1, \ldots, m$.
  The weights are 0.
  Further, all $v_i, i = 1, \ldots, n$ in the premises are connected to $\Bracket{h_j(\Multi{x})}{\nu}, j = 1, \ldots, m$.
  $z$ in the premises is connected to $z$ in the conclusion.
  The weights are all 0.

  \begin{equation}
    \infer[\text{rec-}\emptyseq]{\CompSt{ f(t, t_1, \ldots, t_n)}{ \rho }{z}}{
      \CompSt{ g_\emptyseq(\Multi{x})}{ \nu }{z}
      &
      \{ \CompSt{ t}{ \rho }{\emptyseq} \}
      &
      (\CompSt{t_i}{\rho}{v_i})_{i=1, \ldots, n}
    }
  \end{equation}
  In this case, $\Bracket{t}{\rho}$ and $\Bracket{t_i}{\rho}, i = 1, \ldots, n$ in the premises are connected from $\Bracket{f(t, t_1, \ldots, t_n)}{\rho}$.
  $v_i, i = 1, \ldots, n$ is connected to $\Bracket{g_\emptyseq(\Multi{x})}{\nu}$.
  $z$ in the premise is connected to $z$ in the conclusion.
  The weights are all 0.

  \begin{equation}
    \infer[\text{rec-}b]{\CompSt{ f(t, t_1, \ldots, t_n)}{ \rho }{z}}{
      \CompSt{g_b(x_0,  y, \Multi{x})}{\xi}{z}
      &
      \{ \CompSt{t}{\rho}{bv_0} \}
      &
      \CompSt{f(y, \Multi{x})}{\nu}{w}
      &
      (\CompSt{t_i}{\rho}{v_i})_{i=1, \ldots, n}
    }
  \end{equation}
  In this case, $\Bracket{t}{\rho}$ and $\Bracket{t_i}{\rho}, i = 1, \ldots, n$ in the premises are connected from $\Bracket{f(t, t_1, \ldots, t_n)}{\rho}$.
  All $v_i, i = 1, \ldots, n$ and $bv_0$ are connected to $\Bracket{f(y, \Multi{x})}{\nu}$.
  $bv_0$, all $v_i, i = 1, \ldots, n$ and $w$ are connected to $\Bracket{g_i(x_0,  y, \Multi{x})}{\xi}$.
  Finally, $z$ in the premise is connected to $z$ in the conclusion.
  The weights are all 0.
\end{definition}

\begin{lemma}[$S^1_2$]\label{lem:backward}
  Let $\eta$ be a node of $G_\sigma$.
  Assume that from $\eta_1, \ldots, \eta_k$ the edges $e_1, \ldots, e_k$ run to $\eta$.
  If $k \geq 1$, $N(\eta) \leq w(e_i) + N(\eta_i)$ for some $i = 1, \ldots, k$ where $w(e_i)$ is the weight of $e_i$.
\end{lemma}

Let $p =  \eta_0 \stackrel{e_1}{\leftarrow} \eta_1 \stackrel{e_2}{\leftarrow} \eta_2 \cdots \stackrel{e_l}{\leftarrow} \eta_l$ be a path in $G_\sigma$.
$p$ is called a \emph{bounding path} if for each $k = 0, \ldots, l-1$, $N(\eta_k) \leq w(e_{k+1}) + N(\eta_{k+1})$ holds.

\begin{lemma}($S^1_2$)\label{lem:connect-lr}
  If a judgement $\CompSt{t}{\rho}{v}$ appears in $\sigma$, there is a bounding path from $\Bracket{t}{\rho}$ to $v$ in $G_\sigma$.
\end{lemma}

\begin{proof}
  Assume that $\sigma_i \equiv \CompSt{t}{\rho}{v}$.
  We prove the statement of the lemma by induction on $i$.
\end{proof}

\begin{lemma}($S^1_2$)\label{lem:connect-concl-l}
  If a judgement $\CompSt{t}{\rho}{v}$ appears in $\sigma$, there is a bounding path of $\Bracket{t}{\rho}$ from a lefthand of a conclusion.
\end{lemma}

\begin{proof}
  Assume that $\sigma_i \equiv \CompSt{t}{\rho}{v}$.
  We prove the statement of the lemma by induction on $i$.
\end{proof}

\begin{lemma}($S^1_2$)\label{lem:connect-concl}
  To each righthand side of an assumption of an inference in $\sigma$, there is a bounding path of $G_\sigma$ from a lefthand of a conclusion.
\end{lemma}

\begin{proof}
  By Lemmas \ref{lem:connect-lr} and \ref{lem:connect-concl-l}.
\end{proof}

\begin{lemma}($S^1_2$)
  For each node $\eta$ of $G_\sigma$, there is an acyclic path $\eta \stackrel{e_1}{\leftarrow} \eta_1 \stackrel{e_2}{\leftarrow} \eta_2 \cdots \stackrel{e_l}{\leftarrow} \eta_l$ such that
	\begin{enumerate}
		\item $\eta_l$ is a lefthand of a conclusion of $\sigma$.
		\item $N(\eta) \leq N(\eta_l)+ \sum^l_{i=1} w(e_i)$.
	\end{enumerate}
\end{lemma}

\begin{proof}
  By Lemma \ref{lem:connect-lr} and \ref{lem:connect-concl}, there is a bounding path $p$ from a lefthand of conclusion to $\eta$.
  Assume that it contains a cycle $\eta_i, \ldots, \eta_j = \eta_i$.
  Because $p$ is a bounding path, $N(\eta_i) = N(\eta_j) \leq w(e_{j+1}) + N(\eta_{j+1})$.
  Thus, $\eta_1, \ldots, \eta_i, \eta_{j+1}, \eta_l$ is also a bounding path.
  In this way, we can remove all cycles from $p$.
\end{proof}

\begin{proof}[Proof of Lemma \ref{lem:v-num}]
  For each value $v$ which appears in a judgement of $\sigma$, $N(v) \leq N(\Bracket{t_k}{\rho_k}) + \sum_{i = 1}^l w(e_i)$ holds, where $v \stackrel{e_1}{\leftarrow} \eta_1 \stackrel{e_2}{\leftarrow} \eta_2 \cdots \stackrel{e_l}{\leftarrow} \eta_l =  \Bracket{t_k}{\rho_k}$ is an acyclic bounding path from a righthand of a conclusion to $v$.
  $N(\Bracket{t_k}{\rho_k}) \leq \max(T(\sigma), B(\sigma))$ holds.
  $\sum_{i = 1}^l w(e_i)$ is bounded by the number of $b$-rules in $\sigma$.
  Thus, $N(v)$ is bounded by $\max(T(\sigma), B(\sigma)) + \STEPS{\sigma}$.
\end{proof}

\begin{lemma}[$S^1_2$]\label{lem:sigma-rho}
  Let $\sigma$ be a computation of $\CompSt{t_1}{\rho_1}{v_1}, \ldots, \CompSt{t_m}{\rho_m}{v_m}$.
  Then, for any environment $\rho$ that appears in $\sigma$, $L(\rho) \leq \max \{L(\sigma), T(\sigma)\}$, $B(\rho) \leq \max \{ B(\sigma),  T(\sigma) \} + \STEPS \sigma$ and $S(\rho) \leq p_S(B(\sigma), T(\sigma), \STEPS{\sigma})$ for some polynomial $p_S$.
\end{lemma}

\begin{proof}
  $L(\rho) \leq \max \{L(\sigma), T(\sigma)\}$ holds because $\ar(f) \leq T(\sigma)$ for $f$ that appears in $\sigma$.
  $B(\rho) \leq \max \{ B(\sigma),  T(\sigma) \} + \STEPS \sigma$ holds by Lemma \ref{lem:v-num}.
  The polynomial bound for $S(\sigma)$ is obtained from the bounds of $L(\sigma)$ and $B(\sigma)$ together with the fact that all variables in $\rho$ appear in either environments of conclusions of $\sigma$ or $x_1, \ldots, x_{\ar(f)}, f \in \Base(\Multi{\alpha})$ where $\Multi{\alpha}$ are the conclusions of $\sigma$.
\end{proof}

\begin{lemma}($S^1_2$)\label{lem:sigma-size}
  There is a polynomial $p$ such that if there is a computation $\sigma$ of $\CompSt{t_1}{\rho_1}{v_1}, \ldots, \CompSt{t_m}{\rho_m}{v_m}$ then $\SIZE{\sigma} \leq p(\SIZE{\Multi{t}}, \SIZE{\Multi{\rho}}, \STEPS{\sigma})$ holds.
\end{lemma}

\begin{proof}
  By Lemma \ref{lem:f}, \ref{lem:v-num} and \ref{lem:sigma-rho}.
\end{proof}

\begin{lemma}\label{lem:sigma} The relation $\vdash_{\BLEN{B}} \Multi{\alpha}$ on integers $B$ and judgments $\Multi{\alpha}$ can be defined using a $\Sigma^b_1$-formula.
\end{lemma}

\begin{proof}
  Immediate from Lemma \ref{lem:sigma-size}.
\end{proof}

\section{Basic properties of computations}\label{sec:basics}

In this section, the basic properties of computations are proved.
After proving technical lemmas (Lemma \ref{lem:concl+1}, \ref{lem:sigma-rem}), we prove the lemmas concerning the forms of values of computations of $\emptyseq, 0t, 1t$ and numerals (Lemma \ref{lem:succ}, \ref{lem:num}).
Lemma \ref{lem:num} is crucial for our consistency proof because it shows that the numerals are only computed to the equal numerals.
Next we prove Lemma \ref{lem:compat}, which states the values $v_1$ and $v_2$ obtained by computations of the same term $t$ are always compatible, that is, either $v_1 \APPROXEQ v_2$ or $v_2 \APPROXEQ v_1$.
This enables us to extract the most ``accurate'' value $v(t, \rho, \sigma)$ of a term $t$ from a computation $\sigma$ of $t$ under an assignment $\rho$ (Definition \ref{defn:v}).
Substitution lemmas (Lemma \ref{lem:subst-I}, \ref{lem:subst-II}) establish the relation between substitution into a term which is evaluated by a computation, and assignment in the environment in which the term is evaluated.
These lemmas enable us to extend a consistency proof to the substitution rule.
Unlike other lemmas in this section, substitution lemmas are proved in $S^2_2$.
Finally, we prove Lemmas \ref{lem:defn} and \ref{lem:defn-2} which are used to prove ``soundness'' of defining axioms in Section \ref{sec:consistency}.

\begin{lemma}[$S^1_2$]\label{lem:concl+1}
  If $\CompSt{t}{\rho}{v}$ appears as a node in the computation sequence $\sigma, \sigma \vdash \Multi{\alpha}$ (as a DAG), there is a computation sequence $\tau$ such that $\tau \vdash \CompSt{t}{\rho}{v}, \Multi{\alpha}$, $\STEPS{\tau} \leq \STEPS{\sigma} + 1$ and $M(\tau) = M(\sigma)$.
\end{lemma}

\begin{proof}
  If $\CompSt{t}{\rho}{v}$ is derived according to the inference $R$,
  another instance of $R$ is added to $\sigma$, which uses the same assumptions
  as $R$, in which case $\tau$ is obtained.
\end{proof}

\begin{lemma}[$S^1_2$]\label{lem:sigma-rem}
  If there is a computation $\sigma$ such that $\sigma \vdash \alpha, \Multi{\alpha}$, then there exists $\tau$
  such that $\tau \vdash \Multi{\alpha}$ and $\STEPS{\tau} \leq
   \STEPS{\sigma}$.
\end{lemma}

\begin{lemma}[$S^1_2$]\label{lem:succ}
  If $\CompSt{\emptyseq}{\rho}{v}$ is contained in a computation
  $\sigma$, then either $v \equiv \emptyseq$ or $v \equiv *$.
  If $ \CompSt{bt}{\rho}{v}$, where $t$ is not a numeral, is contained in $\sigma$, then either $v \equiv i v_0$ for some g-numeral $v_0$ or $v \equiv *$.
  If $v \equiv i v_0$, then $\sigma$ contains $\CompSt{t}{\rho}{v'_0}$ and $v_0' \APPROXEQ v_0$.
\end{lemma}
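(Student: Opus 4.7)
The plan is to prove this by inversion on the computation rule that introduces the statement in question. For each statement $\sigma_j$ in the computation sequence, by definition there is exactly one rule $R_j$ whose conclusion is $\pi_2(\sigma_j)$, so the argument is a bounded case analysis on $R_j$; this makes it formalizable in $S^1_2$ since the relevant syntactic checks on the G\"odel number of $\sigma_j$ are sharply bounded.

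For the first claim, consider a statement of the form $\CompSt{\epsilon}{\rho}{v}$ in $\sigma$. The Subst-rule (\ref{eq:subst}) requires the main term to be a variable, so it does not apply. The rules (\ref{eq:comp-succ})--(\ref{eq:comp-succ-numeral}) for successors and the rules (\ref{eq:comp-0-fun})--(\ref{eq:comp-rec-succ}) for $\epsilon^m$, $\proj^i_m$, composition and recursion all require the main term to begin with a function symbol other than $\epsilon$. The only applicable rules are thus the $*$-rule (\ref{eq:comp-*}), which yields $v \equiv *$, and the $\epsilon$ / $\epsilon n$ rules (\ref{eq:comp-0}) and (\ref{eq:comp-0-empty}), which yield $v \equiv \epsilon$.

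For the second claim, applied to $\CompSt{s_i t}{\rho}{v}$, the same inversion leaves only the $*$-rule and the two successor rules (\ref{eq:comp-succ}) and (\ref{eq:comp-succ-numeral}). The $*$-rule gives $v \equiv *$; the $s_i$-rule gives $v \equiv s_i v^*$ with $v^*$ a g-numeral (as $v^*$ is an approximation of some $v$, hence is itself a g-numeral by the definition of $\APPROXEQ$); and the $s_i n$-rule applies only when $t$ is itself a g-numeral and yields $v \equiv s_i t$, which is of the required form.

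For the third claim, suppose $v \equiv s_i v_0$. Then the rule $R$ introducing $\CompSt{s_i t}{\rho}{v}$ is either (\ref{eq:comp-succ}) or (\ref{eq:comp-succ-numeral}). In the first case the premises include $\CompSt{t}{\rho}{v_0'}$ for some g-numeral $v_0'$ with $v_0 \equiv v^*$ being an approximation of $v_0'$, i.e.\ $v_0' \APPROXEQ v_0$, and this premise is itself a statement occurring in $\sigma$. In the second case $\rho \equiv \EMPTYSEQ$ and $t \equiv v_0$ is a g-numeral, and the premise $\CompSt{v_0}{\EMPTYSEQ}{v_0}$ supplies the required statement with $v_0' \equiv v_0$, so that $v_0' \APPROXEQ v_0$ trivially. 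The only step requiring minor care is noting that approximations of g-numerals are g-numerals (so the third clause of the second claim is well-posed); this is immediate from the recursive definition of $\APPROXEQ$ restricted to terms built from $s_0,s_1,\epsilon,*$. No induction on the length of $\sigma$ is needed, only a single-step inversion, so the whole argument is a bounded check provable in $S^1_2$.
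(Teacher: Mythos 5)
Your proof is correct and follows essentially the same route as the paper's: an inversion (case analysis) on the computation rule that derives the statement in question, using the shapes of the rules' conclusions and premises. The only difference is that the paper wraps this analysis in an induction on $|||\sigma|||$; your observation that a single-step inversion suffices is accurate, since every case is closed directly by the form of the rule without any genuine appeal to an induction hypothesis.
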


\begin{proof}
  By induction on $\STEPS{\sigma}$.
  The only rule that can derive $\CompSt{\emptyseq}{\rho}{v}$ is either $v$ or $*$-rule.
  Thus, $v$ is either $\emptyseq$ or $*$.
  Similarly, if $\sigma$ derives $\CompSt{bt}{\rho}{v}$ , the only rule that can derive this is $*$ or $i$-rule.
  If $v \equiv iv_0$, $\CompSt{bt}{\rho}{iv_0}$  can only be derived by $i$.
  Thus, the assumptions contain $\CompSt{t}{\rho}{v'_0}$ where $v'_0 \APPROXEQ v_0$.
\end{proof}

\begin{lemma}[$S^1_2$]\label{lem:num}
  If $\CompSt{v}{\rho}{w}$, in which $v$ is a numeral, is contained in a computation $\sigma$, then $v \APPROXEQ w$.
\end{lemma}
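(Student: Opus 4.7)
I would prove this by induction on the structure of the g-numeral $v$ (equivalently, on $||v||$), leaning heavily on Lemma~\ref{lem:succ} to pin down the possible shapes of $w$. Since g-numerals are built from $\epsilon$, $*$, $s_0$, $s_1$, there are three cases to dispatch. Two small preparatory observations are useful: first, $\APPROXEQ$ is transitive (a short induction on the outer term); second, any $w'$ satisfying $w' \APPROXEQ w_0$ for a g-numeral $w_0$ is itself a g-numeral (direct from the recursive definition of $\APPROXEQ$). I would state these once at the start.

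The base cases are immediate. If $v \equiv *$, then the only inference rule whose conclusion has $*$ as main term is the $*$-rule, forcing $w \equiv *$ and $v \APPROXEQ w$ trivially. If $v \equiv \epsilon$, Lemma~\ref{lem:succ} forces $w \equiv \epsilon$ or $w \equiv *$, both of which give $\epsilon \APPROXEQ w$. In the inductive case $v \equiv s_i v_0$ with $v_0$ a g-numeral, I apply Lemma~\ref{lem:succ} to $\CompSt{s_i v_0}{\rho}{w}$: either $w \equiv *$ (done), or $w \equiv s_i w_0$ for some g-numeral $w_0$ together with a statement $\CompSt{v_0}{\rho}{w'}$ occurring in $\sigma$ with $w' \APPROXEQ w_0$. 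The preparatory observation tells me $w'$ is a g-numeral, so the induction hypothesis applies and yields $v_0 \APPROXEQ w'$. Transitivity of $\APPROXEQ$ then gives $v_0 \APPROXEQ w_0$, and therefore $s_i v_0 \APPROXEQ s_i w_0$, as required.

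I do not anticipate any real obstacle: the lemma is essentially a corollary of Lemma~\ref{lem:succ} once transitivity of $\APPROXEQ$ is in hand. The only point that deserves explicit attention is the observation that $w'$ inherits g-numeral status from $w_0$, since without it the induction hypothesis would not formally apply. Every quantifier in the argument is bounded by $|||\sigma|||$ and by the G\"odel number of a subterm of $v$, so the induction is on a bounded formula and formalizes comfortably inside $S^1_2$.
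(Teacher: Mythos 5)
Your proof follows the paper's own argument almost verbatim: induction on the structure of $v$, with Lemma~\ref{lem:succ} supplying the case analysis on $w$ in each of the three cases, and transitivity of $\APPROXEQ$ (which the paper uses silently) closing the successor case. The one thing to correct is your second preparatory observation, which is false as stated: $w' \APPROXEQ w_0$ with $w_0$ a g-numeral does \emph{not} force $w'$ to be a g-numeral, because the first clause of the definition gives $r \APPROXEQ *$ for \emph{every} term $r$ (take $w_0 \equiv *$, or $w_0 \equiv s_0 *$ and $w' \equiv s_0 f(x)$). The fact you need is nevertheless true for a different reason: $w'$ occurs as the value of the computation statement $\CompSt{v_0}{\rho}{w'}$ produced by Lemma~\ref{lem:succ}, and values of computation statements are g-numerals by definition, so the induction hypothesis applies. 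With that justification substituted, your argument coincides with the paper's.
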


\begin{proof}
  The only rules that can derive $\CompSt{v}{\rho}{w}$ are $*$ and $v$-rules.
\end{proof}

\begin{lemma}[$S^1_2$]\label{lem:compat}
  Let $t$ be a term and $v, w$ are g-numerals.
  If both $\CompSt{t}{\rho}{v}$ and $\CompSt{t}{\rho}{w}$ are present in a computation, then $v \APPROXEQ w$ or $w \APPROXEQ v$.
\end{lemma}
We write $v \COMPATIBLE w$ when $v \APPROXEQ w$ or $w \APPROXEQ v$.
$\COMPATIBLE$ is reflexive and symmetric, but not transitive.

To prove the lemma, the next lemma, same as Lemma 4.4. in \cite{Beckmann2002}, is to be observed.
\begin{lemma}[$S^1_2$]\label{lem:beckmann44}
  If g-numerals $w, u, v$ have relations $w \APPROXEQ u, w \APPROXEQ v$, then $u \COMPATIBLE v$.
\end{lemma}

\begin{proof}
  By induction on $\STEPS{w} + \STEPS{u} + \STEPS{v}$.

  If $w = *$, $u = v = *$; therefore, $u \COMPATIBLE v$.
  If $w = \emptyseq$, $u, v$ are either $*$ or $\emptyseq$.
  Therefore, $u \COMPATIBLE v$.
  If $w = bw'$ for some $b = 0, 1$ and $u'$, there are two possibilities of $u$.
  \begin{enumerate}
    \item $u$ is $*$
    \item $u$ is $bu'$
  \end{enumerate}
   If $u$ is $*$, $u \COMPATIBLE v$.
   Therefore, $u \COMPATIBLE v$.
   If $u$ is $bu'$, $w' \APPROXEQ u'$.
   There are also two possibilities of $v$.
   The only non-trivial case is the case in which $v = bv'$.
   The other case is symmetric.
   By induction hypothesis, $u' \COMPATIBLE v'$.
   Therefore, $u \COMPATIBLE v$.
\end{proof}

\begin{corollary}[$S^1_2$]\label{cor:compat}
  If $u \COMPATIBLE v$ for g-numerals $u, v$ and $u \APPROXEQ u'$ and $v \APPROXEQ v'$, then $u' \COMPATIBLE v'$.
\end{corollary}

\begin{proof}
  We assume $u \APPROXEQ v$. Then, $u \APPROXEQ v \APPROXEQ v'$. Thus, $u \APPROXEQ u'$ and $u \APPROXEQ v'$. Using Lemma \ref{lem:beckmann44}, $u' \COMPATIBLE v'$.
\end{proof}

\begin{lemma}[$S^1_2$]\label{lem:compat-f}
  Let $f$ be a function symbol and $w_1, \ldots, w_m$, $v_1, \ldots, v_m$ be g-numerals such that $w_i \COMPATIBLE v_i$ for any $i = 1, \ldots, m$.
  Let $\rho(x_1) = w_1, \ldots, \rho(x_m) = w_m$ and $\nu(x_1) = v_1, \ldots, \rho(x_m) = v_m$.
  We assume that $\CompSt{f(x_1, \ldots, x_m)}{\rho}{w}$ and $\CompSt{f(x_1, \ldots, x_m)}{\nu}{v}$ are present in the same computation $\sigma$.
  Then, $w \COMPATIBLE v$.
\end{lemma}

\begin{proof}
  We assume that $\CompSt{f(x_1, \ldots, x_m)}{\rho}{w}$ is $i$-the judgment of $\sigma$, while $\CompSt{f(x_1, \ldots, x_m)}{\nu}{v}$ is the $j$-th judgment of $\sigma$.
  The lemma is proved by induction on $i + j$ and case analysis of the rules that derive these statements.
  Although it appears that we use induction on $\Pi^0_1$-formula (because $f$ and g-numerals $w_1, \ldots, w_m, v_1, \ldots, v_m$ are not bounded), we actually need to consider only those that are included in the computation $\sigma$.
  Thus, the quantifier is bounded.
  Because $*$ is compatible with any g-numerals, we assume that $w$ and $v$ are not $*$.
  This assumption makes it possible to uniquely determine the label $R$ of the rules deriving $i$- and $j$-th judgment by $f$.

  The possible $R$ are among the $b$, $\emptyseq^m$, $\proj^i_m$, $\textup{comp}$, $\textup{rec-}\emptyseq$, and $\textup{rec-}b$-rule.

  The case in which $R$ is $b$, $\emptyseq^m$ or $\proj^i_m$ is trivial.

  The case in which $R$ is $\textup{comp}$ is considered. The last rules have the following forms.
  \begin{gather}
    \infer[\text{comp}]{\langle f(x_1, \ldots, x_m), \rho \rangle \downarrow w}{
      \langle g(\Multi{y}), \rho_0 \rangle \downarrow w
      &
      \langle h_1(\Multi{x}), \rho^* \rangle \downarrow z_1
       \cdots
      \langle h_k(\Multi{x}), \rho^* \rangle \downarrow z_k &
      (\CompSt{x_i}{\rho}{w_i^*})_{i = 1, \ldots, m}
    }\\
    \infer[\text{comp}]{\langle f(x_1, \ldots, x_m), \nu \rangle \downarrow v}{
      \langle g(\Multi{y}), \nu_0 \rangle \downarrow v
      &
      \langle h_1(\Multi{x}), \nu^* \rangle \downarrow z'_1
       \cdots
      \langle h_k(\Multi{x}), \nu^* \rangle \downarrow z'_k &
      (\CompSt{x_i}{\rho}{v_i^{*}})_{i = 1, \ldots, m}
    }
  \end{gather}
  where $\rho^*(x_1) = w_1^*$ for $i = 1, \ldots, m$, $\nu^*(x_i) = v_i^*$ for $i = 1, \ldots, m$, $\rho_0(y_1) = z_1, \ldots, \rho_0(y_k) = z_k$ and $\nu_0(y_1) = z'_1, \ldots, \nu_k(y_k) = z'_k$.
  By induction hypothesis, $z_i \COMPATIBLE z'_i$ for each $i = 1, \ldots, m$.
  By induction hypothesis, $w \COMPATIBLE v$.

  Finally, the case in which the last rule is either $\textup{rec}$-$\emptyseq$ or $\textup{rec}$-$b, b = 0, 1$ is considered. Because the case for $\textup{rec}$-$\emptyseq$ is similar to $\textup{comp}$, we consider the case in which the last rule is $\textup{rec}$-$b$.
  \begin{gather}
   \infer{\langle f(x_0, \Multi{x}), \rho \rangle \downarrow w}{
      \langle g_i(y, x_0, \Multi{x}), \rho^*[y \mapsto z] \rangle \downarrow w
      &
      \langle f(x_0, \Multi{x}), \rho^* \rangle \downarrow z&
      (\CompSt{x_i}{\rho}{w_i})_{i = 0, \ldots, m}
    }\\
    \infer{\langle f(x_0, \Multi{x}), \nu \rangle \downarrow v}{
       \langle g_i(y, x_0, \Multi{x}), \nu^*[y \mapsto z'] \rangle \downarrow v
       &
       \langle f(x_0, \Multi{x}), \nu^* \rangle \downarrow z'&
       (\CompSt{x_i}{\rho}{v_i})_{i = 0, \ldots, m}
     }
  \end{gather}
  Here, $\Multi{x} = x_1, \ldots, x_m$, $\rho^*(x_1) = w_1^*, \ldots, \rho^*(x_m) = w_m^*$ while $\rho^*(x_0) = w'_0$ when $w_0^* = b w'_0$.
  Similarly, $\nu^*(x_1) = v_1^*, \ldots, \nu^*(x_m) = v_m^*$, while $\nu^*(x_0) = v^*_0$ when $v_0^* = b v'_0$.
  $w_1^*, v_1^*$ must be in the above forms, otherwise the inference is not valid.
  Because $w_0 \COMPATIBLE v_0, \ldots, w_m \COMPATIBLE v_m$, $w_0^* \COMPATIBLE v_0^*, \ldots, w_m^* \COMPATIBLE v_m^*$.
  Further, by definition of $\APPROXEQ$, $w'_0 \COMPATIBLE v'_0$.
  By induction hypothesis, $z \COMPATIBLE z'$.
  Again, by applying induction hypothesis, $w \COMPATIBLE v$.
\end{proof}

\begin{proof}[Proof of Lemma \ref{lem:compat}]
  We assume that $\CompSt{t}{\rho}{v}$ is $i$-the judgment of $\sigma$, while $\CompSt{t}{\rho}{w}$ is the $j$-th judgment of $\sigma$.
  The lemma is proved by induction on $i + j$ and case analysis of the rules that derives these statements.
  Because the case in which either $v$ or $w$ is $*$ is trivial, we can assume that the rules that derive $\CompSt{t}{\rho}{v}$ and $\CompSt{t}{\rho}{w}$ has the same label $R$.

  If $R$ is $\Env$, $t \equiv x$ for a variable $x$ and both derivations have forms
  \begin{gather}
    \infer{\CompSt{x}{\rho[x \mapsto v_0]}{v}}{
    } \\
    \infer{\CompSt{x}{\rho[x \mapsto v_0]}{w}}{
    }
  \end{gather}
  where $v_0 \APPROXEQ v, w$.
  By Lemma \ref{lem:beckmann44}, $v \COMPATIBLE w$ holds.

  Thus, we can assume that $t = f(t_1, \ldots, t_n)$.
  If $f \equiv 0, 1$, then the proof is trivial by induction hypothesis.

  Otherwise, the derivations have the following forms.
  \begin{gather}
    \infer{\CompSt{f(t_1, \ldots, t_n)}{\rho}{v}}{
      \Multi{\beta} &
      (\CompSt{t_i}{\rho}{v_i})_{i = 1, \ldots, n}
    }\\
    \infer{\CompSt{f(t_1, \ldots, t_n)}{\rho}{w}}{
      \Multi{\gamma} &
      (\CompSt{t_i}{\rho}{w_i})_{i = 1, \ldots, n}
    }
  \end{gather}
  where
  \begin{align}
    \beta &= \CompSt{f_1(y^1_1, \ldots, y^1_{m_1})}{\rho_1}{z_1}, \ldots \CompSt{f_k(y^k_1, \ldots, y^k_{m_1})}{\rho_k}{z_k}\\
    \gamma &= \CompSt{f_1(y^1_1, \ldots, y^1_{m_1})}{\nu_1}{z'_1}, \ldots \CompSt{f_k(y^k_1, \ldots, y^k_{m_1})}{\nu_k}{z'_k}.
  \end{align}
  By induction hypothesis, $v_i \COMPATIBLE w_i$ for all $i = 1, \ldots, n$.
  Using Lemma \ref{lem:compat-f} repeatedly, we obtain $z_1 \COMPATIBLE z_1'$.
  Because $z_1 \APPROXEQ v$ and $z_1' \APPROXEQ w$, $v \COMPATIBLE w$.
\end{proof}

Note that if $v_1 \COMPATIBLE v_2$, the infimum of $v_1$ and $v_2$ exists.
This fact enables the following definition.

\begin{definition}\label{defn:v}
  Let $t$ be a term of $\PV^-$, and $\sigma$ be a computation.
  Assume $\sigma$ contains computational judgments $\CompSt{t}{\rho}{v_1}, \ldots, \CompSt{t}{\rho}{v_n}$.
  By Lemma \ref{lem:compat}, $v_1, \ldots, v_n$ are compatible.
  $v(t, \rho, \sigma)$ is defined as infimum of them.
  If there is no computational judgment of $t$ with the environment $\rho$ in $\sigma$, $v(t, \rho, \sigma) = *$.
\end{definition}

\begin{lemma}\label{lem:extract-v}
  Let $\sigma$ be a computation, $t$ be a term, and $\rho$ be an environment.
  Let $v = v(t, \rho, \sigma)$.
  If $v$ is a g-numeral other than $*$, $\sigma$ contains a judgment $\CompSt{t}{\rho}{v}$.
\end{lemma}

\begin{lemma}[$S^2_2$, Substitution Lemma I]\label{lem:subst-I}
  Let $\sigma$ be a computation that contains occurrences of judgments
  \begin{equation}
    \CompSt{t_1(u_1, \ldots, u_n)}{\rho}{v_1}, \ldots, \CompSt{t_m(u_1, \ldots, u_n)}{\rho}{v_m}.
  \end{equation}
  as conclusions.
  Assume that $v(u_i, \rho, \sigma) \APPROXEQ w_i$ for $i = 1, \ldots, n$ and let $\rho' = \rho[x_1 \mapsto w_1, \ldots, x_n \mapsto w_n]$ where $x_1, \ldots, x_n$ are fresh variables.
  Then, there is a computation $\tau$ such that $\STEPS{\tau} \leq \STEPS{\sigma} + \sum_{j=1}^m \SIZE{t_j(\emptyseq, \ldots, \emptyseq)}$ and $\tau$ has conclusions
  \begin{equation}
  \CompSt{t_1}{\rho'}{v_1}, \ldots, \CompSt{t_m}{\rho'}{v_m}.
  \end{equation}
  Further, $\tau$ contains all judgments in $\sigma$ as judgments and all conclusions in $\sigma$ as conclusions.
\end{lemma}
Each $\CompSt{t_j(u_1, \ldots, u_n)}{\rho}{v_j}, j = 1, \ldots, m$ is an occurrence of a judgment but denoted as if it is a judgment by abusing notations.
Similarly, $u_1, \ldots, u_n$ in each $\CompSt{t_j(u_1, \ldots, u_n)}{\rho}{v_j}, j = 1, \ldots, m$ are occurrences of terms but denoted as if they are terms.

\begin{proof}
  Let $U$ be a fixed integer larger then $\STEPS{\sigma} + \sum_{j=1}^m \SIZE{t_j(\emptyseq, \ldots, \emptyseq)}$.
  By induction on
  \begin{equation}
  \sum \{ \SIZE{s_d(\emptyseq, \ldots, \emptyseq)} \mid \CompSt{s_d(u_1, \ldots, u_n)}{\rho}{z_d} \in A \}
  \end{equation}
  and subinduction on $\STEPS{\kappa}$, we prove the following induction hypothesis (Claim \ref{claim:subst-I}).
  \begin{claim}\label{claim:subst-I}
    Let $\kappa$ be a computation with distinguished occurrences of judgments
    \begin{equation}
      A \equiv \CompSt{s_1(u_1, \ldots, u_n)}{\rho}{z_1}, \ldots, \CompSt{s_k(u_1, \ldots, u_n)}{\rho}{z_k}
    \end{equation}
    among conclusions and satisfies
    \begin{align}
      \STEPS{\kappa} &\leq U - \sum_{d=1}^k \SIZE{s_d(\emptyseq, \ldots, \emptyseq)} \label{eq:lem1-nodes}\\
      T(\kappa) &\leq T(\sigma) \label{eq:lem1-T}\\
      B(\kappa) &\leq B(\sigma). \label{eq:lem1-B}
    \end{align}
    Further, $\kappa$ contains all judgments of $\sigma$.
    Then, there is a computation $\lambda$ that has all conclusions of $\kappa$ and $\CompSt{s_1}{\rho'}{z_1}, \ldots, \CompSt{s_k}{\rho'}{z_k}$ as conclusions.
    $\lambda$ satisfies
    \begin{align}
      \STEPS{\lambda} &\leq \STEPS{\kappa} + \sum_{d=1}^k \SIZE{s_d(\emptyseq, \ldots, \emptyseq)}
    \end{align}
    and contains all judgments in $\kappa$.
  \end{claim}
  The claim is a $\Pi^b_2$-formula because first $\kappa$ is universally quantified and $\lambda$ is existentially quantified next.
  Because $\kappa$ changes through the induction steps, quantification over $\kappa$ is necessary.
  The quantification on $\kappa$ is polynomially bounded, because of the conditions of \eqref{eq:lem1-nodes}, \eqref{eq:lem1-T}, and \eqref{eq:lem1-B}.
  The quantification of $\lambda$ is also polynomially bounded, because $T(\lambda)$ and $B(\lambda)$ are polynomially bounded by $|\kappa|$.
  Therefore, the proof can be formalized by $\Pi^b_2$-PIND.
  From the claim, our lemma is readily proven.
  Therefore, our proof can be formalized in $S^2_2$.

  We can safely assume that the last judgment is $\CompSt{s_1(u_1, \ldots, u_n)}{\rho}{w_1}$.
  We use case analysis of the last rule of $\kappa$.
  Because $\kappa$ contains all judgments of $\sigma$, $v(u_i, \rho, \kappa) \APPROXEQ w_i$ for $i = 1, \ldots, n$.

  If the last rule of $\kappa$ is either $*$ or $\Env$-rule, the proof is trivial.

  If the last rule of $\kappa$ is $v$-rule, $s_1(u_1, \ldots, u_n) \equiv b_1 \cdots b_l(u_1)$ and $u_1$ is a numeral.
  First we remove $\CompSt{s_1}{\rho}{v_1}$ from $A$ and use the induction hypothesis.
  We obtain the computation $\tau_1$.
  We add the rules
  \begin{equation}
    \infer*{\CompSt{b_1 \cdots b_l(x)}{\rho'}{b_1 \cdots b_l z_1^*}}{
        \infer[\Env]{\CompSt{x}{\rho'}{z_1^*}}{}
        }
  \end{equation}
  to $\tau_1$.
  Let this computation $\tau$.
  \begin{equation}
    \STEPS{\tau} \leq \STEPS{\sigma} + 1 + \sum_{d = 2}^k \SIZE{s_d(\emptyseq, \ldots, \emptyseq)}
  \end{equation}
  Therefore, we can prove the lemma.

  The case in which the last rule of $\kappa$ is either $0, 1$, $\emptyseq^m$, $\proj$, comp or rec-rule.
  Then, the computation rule has a form
  \begin{equation}
    \infer[\pair]{\CompSt{f(r_1(u_1, \ldots, u_n), \ldots, r_l(u_1, \ldots, u_n))}{\rho}{w_1}}{
      \Multi{\beta} &
      (\CompSt{r_q(u_1, \ldots, u_n)}{\rho}{p_q})_{q = 1, \ldots, l}
    }
  \end{equation}
  where $\Multi{\beta}$ is purely numerical, which can be empty.
  Let $\kappa_1$ be a computation obtained by making $(\CompSt{r_q(u_1, \ldots, u_n)}{\rho}{p_q})_{q = 1, \ldots, l}$ conclusions.
  This increases $\STEPS{\kappa_1}$ from $\STEPS{\kappa}$ at most $l$.
  By explicitly counting parentheses and a comma, $\sum_{q=1}^l \SIZE{r_q(\emptyseq, \ldots, \emptyseq)} + l + 2 \leq \SIZE{s_1(\emptyseq, \ldots, \emptyseq)}$.
  Because
    \begin{align}
      &\STEPS{\kappa_1}\\
      &\leq \STEPS{\kappa} + l  \\
      &\leq U - \sum_{d=1}^k \SIZE{s_d(\emptyseq, \ldots, \emptyseq)} + l \\
      &\leq U - \sum_{q=1}^l \SIZE{r_q(\emptyseq, \ldots, \emptyseq)} - l - 2 - \sum_{d=2}^k \SIZE{s_d(\emptyseq, \ldots, \emptyseq)} + l\\
      &\leq U - \sum_{q=1}^l \SIZE{r_q(\emptyseq, \ldots, \emptyseq)} - \sum_{d=2}^k \SIZE{s_d(\emptyseq, \ldots, \emptyseq)} - 2
    \end{align}
  we can apply the induction hypothesis to $\kappa_1$.
  Therefore, we obtain $\lambda_1$ that contains all conclusions of $\kappa$ plus $(\CompSt{r_q}{\rho'}{p_q})_{q=1, \ldots, l}, (\CompSt{s_d}{\rho'}{z_d})_{d = 1, \ldots, m}$ as conclusions.
  By adding one rule to $\lambda_1$, we obtain $\lambda$.
  \begin{align}
    &\STEPS{\lambda}\\
     &\leq \STEPS{\lambda_1} + 1\\
    &\leq \STEPS{\kappa_1} + \sum_{q=1}^l \SIZE{r_q(\emptyseq, \ldots, \emptyseq)} + \sum_{i = 2}^k \SIZE{s_d(\emptyseq, \ldots, \emptyseq)} + 1\\
    &\leq \STEPS{\kappa} + l + \sum_{q=1}^l \SIZE{r_q(\emptyseq, \ldots, \emptyseq)} + \sum_{d = 2}^k \SIZE{s_d(\emptyseq, \ldots, \emptyseq)} + 1\\
    &\leq \STEPS{\kappa} + \sum_{d = 1}^k \SIZE{s_d(\emptyseq, \ldots, \emptyseq)} - 1
  \end{align}

  By construction, $\lambda$ contains all judgments in $\kappa$.
\end{proof}

\begin{lemma}[$S^2_2$, Substitution Lemma II]\label{lem:subst-II}
  Let $\sigma$ be a computation with conclusions $\CompSt{t_j(x_1, \ldots, x_n)}{\rho[x_1 \mapsto w_1, \ldots, x_n \mapsto w_n]}{v_j}$ for $j = 1, \ldots, m$.
  We assume that variables $x_1, \ldots, x_n$ are not in the domain of $\rho$ and does not appear in the main terms of conclusions, except $t_1, \ldots, t_m$.
  Assume that $v(u_i, \rho, \sigma) \APPROXEQ w_i$ for $i = 1, \ldots, n$.
  Then, there is a computation $\tau$ that has all the conclusions of $\sigma$ plus $\CompSt{t_j(u_1, \ldots, u_n)}{\rho}{v_j'}$ where $v_j' \APPROXEQ v_j$ for $j = 1, \ldots, m$ as conclusions and
  \begin{equation}
    \STEPS{\tau} \leq \STEPS{\sigma} + \sum_{j = 1}^m \SIZE{t_j(\emptyseq, \ldots, \emptyseq)}.
  \end{equation}
  Further, $\tau$ contains all judgments in $\sigma$.
\end{lemma}

\begin{proof}
  Similar to Lemma \ref{lem:subst-I}, let $U$ be an integer larger than $\STEPS{\sigma} + \sum_{j=1}^m \SIZE{t_j(\emptyseq, \ldots, \emptyseq)}$.
  By induction on
  \begin{equation}
  \sum \{ \SIZE{s_d(\emptyseq, \ldots, \emptyseq)} \mid \CompSt{s_d(x_1, \ldots, x_n)}{\rho}{z_d} \in A \}
  \end{equation}
  and subinduction on $\STEPS{\kappa}$, we prove the following induction hypothesis (Claim \ref{claim:subst-II}).
  \begin{claim}\label{claim:subst-II}
    Let $\rho' = \rho[x_1 \mapsto w_1, \ldots, x_n \mapsto w_n]$.
    Let $\kappa$ be a computation with conclusions
    \begin{equation}
      A \equiv \CompSt{s_1(x_1, \ldots, x_n)}{\rho'}{z_1'}, \ldots, \CompSt{s_k(x_1, \ldots, x_n)}{\rho'}{z_k'}
    \end{equation}
    Assume that $\kappa$ contains all judgments of $\sigma$.
    We assume that variables $x_1, \ldots, x_n$ do not appear in the main terms of conclusions, except $s_1, \ldots, s_k$.
    Further, we assume that $\kappa$ satisfies
    \begin{align}
      \STEPS{\kappa} &\leq U - \sum_{d = 1}^k s_d(\emptyseq, \ldots, \emptyseq) \label{eq:lem2-nodes}\\
      T(\kappa) &\leq T(\sigma) \label{eq:lem2-T}\\
      B(\kappa) &\leq B(\sigma) \label{eq:lem2-B}.
    \end{align}
    Then, there is a computation $\lambda$ of which the conclusions are $\CompSt{s_d(u_1, \ldots, u_n)}{\rho}{z_d'}$ where $z_d' \APPROXEQ z_d$ for $d = 1, \ldots, k$ and
    \begin{equation}
      \STEPS{\lambda} \leq \STEPS{\kappa} + \sum_{d = 1}^k \SIZE{s_d(\emptyseq, \ldots, \emptyseq)}.
    \end{equation}
    Further, $\lambda$ contains all judgments in $\kappa$.
  \end{claim}
  As Claim \ref{claim:subst-I}, the claim is $\Pi^b_2$-formula, because first $\kappa$ is universally quantified and $\lambda$ is existentially quantified next.
  Because $\kappa$ changes through the induction steps, quantification over $\kappa$ is necessary.
  The quantification on $\kappa$ is polynomially bounded, because of the conditions of \eqref{eq:lem2-nodes}, \eqref{eq:lem2-T}, and \eqref{eq:lem2-B}.
  The quantification of $\lambda$ is also polynomially bounded, because $T(\lambda)$ and $B(\lambda)$ are polynomially bounded by $|\kappa|$.
  Therefore, the proof can be formalized by $\Pi^b_2$-PIND.
  Therefore, our proof can be formalized in $S^2_2$.
  From the claim, our lemma is readily proven.
  Because $\kappa$ contains all judgments of $\sigma$, $v(u_i, \rho, \kappa) \APPROXEQ w_i$.

  We can safely assume that the last judgment is $\CompSt{s_1(x_1, \ldots, x_n)}{\rho'}{z_1}$.
  We use case analysis on the last rule of $\kappa$.

  If the last rule of $\kappa$ is either $*$ or $v$-rule, the proof is trivial.

  If the last rule of $\kappa$ is $\Env$-rule for $x_i$, we replace it by $\CompSt{u_i}{\rho}{v(u_i, \rho, \kappa)}$.

  The case in which the last rule of $\kappa$ is either $\emptyseq^m$, $0, 1$, $\proj$, comp, or rec-rule.
  Then, the computation rule has the form
  \begin{equation}
    \infer{\CompSt{f(r_1, \ldots, r_l)}{\rho'}{z_1}}{
      \Multi{\beta} &
      (\CompSt{r_q}{\rho'}{p_q})_{q = 1, \ldots, l}
    }
  \end{equation}
  where $\Multi{\beta}$ is purely numerical, which can be empty.
  Let $\kappa_1$ be the computation obtained by making $(\CompSt{r_q}{\rho}{p_q})_{q = 1, \ldots, l}$ conclusions by increasing $\STEPS{\kappa}$ at most $l$.
  We add $(\CompSt{r_q}{\rho}{p_q})_{q = 1, \ldots, l}$ to $A$ while removing the occurrence of $\CompSt{s_1(x_1, \ldots, x_n)}{\rho'}{z_1}$ from $A$.
  By explicitly counting parentheses and a comma,
  \begin{equation}
    \sum_{d=1}^l \SIZE{r_q(\emptyseq, \ldots, \emptyseq)} + l + 2 \leq \SIZE{s_1(\emptyseq, \ldots, \emptyseq)}.
  \end{equation}
  Because
    \begin{align}
      &\STEPS{\kappa_1}\\
      &\leq \STEPS{\kappa} + l  \\
      &\leq U - \sum_{d=1}^k \SIZE{s_d(\emptyseq, \ldots, \emptyseq)} + l \\
      &\leq U - \sum_{q=1}^l \SIZE{r_q(\emptyseq, \ldots, \emptyseq)} - l - 2 - \sum_{d=2}^k \SIZE{s_d(\emptyseq, \ldots, \emptyseq)} + l\\
      &\leq U - \sum_{q=1}^l \SIZE{r_q(\emptyseq, \ldots, \emptyseq)} - \sum_{d=2}^k \SIZE{s_d(\emptyseq, \ldots, \emptyseq)} - 2
    \end{align}
  Thus, we can apply the induction hypothesis to $\kappa_1$.
  Therefore, we obtain $\lambda_1$ of which conclusions are $(\CompSt{r_q(u_1, \ldots, u_n)}{\rho}{p_q'})_{q=1, \ldots, l}, (\CompSt{s_d(u_1, \ldots, u_n)}{\rho}{z_d'})_{d = 2, \ldots, k}$, where $p_q' \APPROXEQ p_q$ and $z_d' \APPROXEQ z_d$.
  By adding one rule to $\lambda_1$, we obtain $\lambda$
  \begin{equation}
    \infer{\CompSt{s_1(u_1, \ldots, u_n)}{\rho}{z_1.}}{
      \Multi{\beta} &
      (\CompSt{r_q(u_1, \ldots, u_n)}{\rho}{p_q'})_{q = 1, \ldots, l}
    }
  \end{equation}
  $\STEPS{\lambda}$ is bounded by
  \begin{align}
    &\STEPS{\lambda_1} + 1\\
    &\leq \STEPS{\kappa_1} + \sum_{q=1}^l \SIZE{r_q(\emptyseq, \ldots, \emptyseq)} + \sum_{d = 2}^k \SIZE{s_d(\emptyseq, \ldots, \emptyseq)} + 1\\
    &\leq \STEPS{\kappa} + l + \sum_{q=1}^l \SIZE{r_q(\emptyseq, \ldots, \emptyseq)} + \sum_{d = 2}^m \SIZE{s_d(\emptyseq, \ldots, \emptyseq)} + 1\\
    &\leq \STEPS{\kappa} + \sum_{d = 1}^m \SIZE{s_d(\emptyseq, \ldots, \emptyseq)} - 1
  \end{align}

  By construction, $\lambda$ contains all judgments in $\kappa$.
\end{proof}

\begin{lemma}[$S^1_2$]\label{lem:defn}
  Let
  \begin{equation}\label{eq:defn}
    f(\Multi{u}) = t
  \end{equation}
  is a substitution instance of the defining axiom of a function $f$.
  If $\vdash_{|B|} \langle f(\Multi{u}), \rho \rangle \downarrow v, \Multi{\alpha}$, then
  \begin{equation}
    \vdash_{|B| + \SIZE{f(\Multi{u}) = t}} \langle t, \rho \rangle \downarrow v', \Multi{\alpha}
  \end{equation}
  such that $v' \APPROXEQ v$.
\end{lemma}

\begin{proof}
  Let $\sigma$ be a computation sequence that derives $\vdash_{|B|} \langle f(\Multi{u}), \rho
  \rangle \downarrow v, \Multi{\alpha}$.
  The lemma is proven by conducting a case analysis of the inference rule $R$ of $\CompSt{f( \Multi{u})}{\rho}{v}$ and the defining axioms of $f$.

  If $R$ is $*$-rule, the proof is obvious.
  Therefore, we assume that $R$ is not a $*$-rule.
  Then, $R$ is determined by the defining axiom of $f$.

  For the case that $f \equiv \emptyseq$ or $f \equiv 0, 1$, the
  defining axioms do not exist. Thus, the lemma vacuously holds.

  For the case in which $f \equiv \emptyseq^n$, $R$ has the form
  \begin{equation}
    \infer{\CompSt{ \emptyseq^n(u_1, \ldots, u_n)}{ \rho }{\emptyseq}.}{
      (\CompSt{u_i}{\rho}{w_i})_{i=1, \ldots, n}
    }
  \end{equation}
  Then,
  \begin{equation}
    \infer{\CompSt{ \emptyseq}{ \rho }{\emptyseq}.}{}
  \end{equation}
  The case is valid.

  For the case in which $f \equiv \proj^n_i$, $R$ has the form
  \begin{equation}
    \infer{\CompSt{ \proj^n_i(u_1, \ldots, u_n)}{ \rho }{w_i^*}.}{
    (\CompSt{u_j}{\rho}{w_j})_{j=1, \ldots, n}
    }
  \end{equation}
  Because $w_i \APPROXEQ w_i^*$, the lemma is proved.

  For the case in which $f$ is defined by the composition $g(h_1(\overline{x}), \ldots, h_n(\overline{x}))$, the inference $R$ of $\sigma$ that derives $\langle f(u_1, \ldots, u_n), \rho \rangle \downarrow v$, has the following form.
  \begin{equation}
    \infer{\CompSt{ f(u_1, \ldots, u_n)}{ \rho }{v}}{
      \CompSt{ g(\Multi{y})}{ \xi }{v}
      &
      \CompSt{ h_1(\Multi{x})}{ \nu }{w_1}
      & \cdots &
      \CompSt{h_m(\Multi{x})}{\nu}{w_m}&
      (\CompSt{u_i}{\rho}{z_i})_{i= 1, \ldots, n}
      }
  \end{equation}
  Because $\nu(x_i) = z_i, i = 1, \ldots, n$, using Lemma \ref{lem:subst-II} repeatedly, $\tau_1 \vdash \CompSt{g(\Multi{y})}{\xi}{v}, \CompSt{h_1(\Multi{u})}{\rho}{w_1'}, \ldots, \CompSt{h_m(\Multi{u})}{\rho}{w_m'}$ where $w_1' \APPROXEQ w_1, \ldots, w_m' \APPROXEQ x_m$ is obtained, in which
  \begin{equation}
    \STEPS{\tau_1} \leq \STEPS{\sigma} + \sum_j^m \SIZE{h_j(\Multi{\emptyseq})}.
  \end{equation}
  Again, using Lemma \ref{lem:subst-II}, $\tau \vdash \CompSt{g(\Multi{h}(\Multi{u}))}{\rho}{v'}, \Multi{\alpha}$ where $v' \APPROXEQ v$, is obtained where
  \begin{align}
    \STEPS{\tau} &\leq \STEPS{\tau_1} + \SIZE{g(\Multi{\emptyseq})}\\
    &\leq \STEPS{\sigma} + \SIZE{g(\Multi{\emptyseq})} + \sum_j^m \SIZE{h_j(\Multi{\emptyseq})}\\
    &\leq \STEPS{\sigma} + \SIZE{g(\Multi{h}(\Multi{u}))} + m\\
    &\leq \STEPS{\sigma} + \SIZE{f(\Multi{u}) = g(\Multi{h}(\Multi{u}))}.
  \end{align}
  The case is valid.

  If $f$ is defined by recursion, the inference that derives $\langle f(u_1, \ldots, u_n), \rho \rangle \downarrow v$ has the form.
    \begin{equation}\label{eq:last_0}
      \infer{\CompSt{ f(\emptyseq, u_2, \ldots, u_n)}{ \rho }{v}}{
        \CompSt{ g_\emptyseq(\Multi{x})}{ \nu }{v} &
        \CompSt{\emptyseq}{\rho}{\emptyseq} &
        (\CompSt{u_i}{\rho}{z_i})_{i = 2, \ldots, n}
      }
    \end{equation}
    or for each $b = 0, 1$,
    \begin{equation}\label{eq:b}
      \infer{\CompSt{ f(bu, \Multi{u})}{ \rho }{v}}{
        \CompSt{ g_b(y,  \Multi{x})}{ \nu[y \mapsto w] }{v}
        &
        \CompSt{ f(x_0, \Multi{x})}{ \nu }{w}
        &
        \CompSt{bu}{ \rho }{bz_0}
        &
        (\CompSt{u_i}{\rho}{z_i})_{i = 2, \ldots, n}
        }
    \end{equation}
    where $\nu(x_0) = z_0$, while $\nu(x_k) = z_k, k = 2, \ldots, n$.
    First, consider the case of \eqref{eq:last_0}.
    According to Lemma \ref{lem:subst-II} and Lemma \ref{lem:sigma-rem}, we have $\tau$ that satisfies $\tau \vdash \langle g(u_1, \ldots, u_n), \rho \rangle \downarrow v', \Multi{\alpha}$ where $v' \APPROXEQ v$ and $\STEPS{\tau} \leq \STEPS{\sigma} + \SIZE{g(\Multi{\emptyseq})}$.
    Thus, the case has been shown to be valid.
    Next, consider the case of \eqref{eq:b}.
    According to Lemma
    \ref{lem:succ}, $\CompSt{ u}{ \rho }{z'_0}, z'_0 \APPROXEQ z_0$ is contained in $\sigma$.
    According to Lemma \ref{lem:subst-II}, we have $\tau_1$ that has the conclusion $\CompSt{f(u, \Multi{u})}{\rho}{w'}$, where $w \APPROXEQ w'$.
    Because $\tau_1$ contains all the judgments of $\sigma$, $\CompSt{ g_b(y, \Multi{x})}{ \nu[y \mapsto w] }{v}$, and $\CompSt{u_i}{\rho}{z_i}, i = 2, \ldots, n$ appear in $\tau_1$.
    Using Lemma \ref{lem:subst-II} again, we obtain $\tau \vdash \CompSt{g_b(u, f(u, \Multi{u}),\Multi{u})}{\rho}{v'}$, where $v' \APPROXEQ v$.
    \begin{align}
      \STEPS{\tau} &\leq \STEPS{\tau_1} + \SIZE{g_b(\Multi{\emptyseq})}\\
      &\leq \STEPS{\sigma} + \SIZE{f(\Multi{\emptyseq})} + \SIZE{g_b(\Multi{\emptyseq})}\\
      &\leq \STEPS{\sigma} + \SIZE{g_b(u, f(u, \Multi{u}),\Multi{u})} + 1
    \end{align}
    The case is valid.
  \end{proof}

  \begin{lemma}($S^1_2$)\label{lem:defn-2}
  Let
  \begin{equation}\label{eq:defn-2}
    f(\Multi{u}) = t
  \end{equation}
  is a substitution instance of the defining axiom of a function $f$.
  If $\vdash_{|B|} \langle t, \rho \rangle \downarrow v, \Multi{\alpha}$, then
  \begin{equation}
    \vdash_{|B| + \SIZE{f(\Multi{u}) = t}} \langle f(\Multi{u}), \rho \rangle \downarrow v', \Multi{\alpha}
  \end{equation}
  such that $v' \APPROXEQ v$.
 \end{lemma}

  \begin{proof}
	Let $\sigma$ be a computation that derives $\vdash_{|B|} \langle t, \rho \rangle \downarrow v, \Multi{\alpha}$.
  	The lemma is proven by conducting a case analysis of the defining axiom of $f$.
    Because the case in which $v \equiv *$ is trivial, we assume that $v \not\equiv *$.

    In the case in which \eqref{eq:defn-2} has the form
    \begin{equation}
      \emptyseq^n(u_1, \ldots, u_m) = \emptyseq,
    \end{equation}
    By the computation rule
    \begin{equation}
      \infer{\CompSt{\emptyseq^n(u_1, \ldots, u_m)}{\rho}{\emptyseq}}{
  		\CompSt{u_1}{\rho}{*} &\ldots&
  		\CompSt{u_m}{\rho}{*}
  	}
    \end{equation}
    $\vdash_{\SIZE{\emptyseq^n(u_1, \ldots, u_m) = \emptyseq}} \CompSt{\emptyseq^n(u_1, \ldots,
      u_m)}{\rho}{\emptyseq}$ holds.
    By Lemma \ref{lem:succ}, if $\CompSt{\epsilon}{\rho}{v}$, then $v$ is either $*$ or $\emptyseq$.
    Therefore, the case is valid.

    In the case for which \eqref{eq:defn-2} has the form
    \begin{equation}
      \proj_i^n(u_1, \ldots, u_m) = u_i,
    \end{equation}
    the computation rule
    \begin{equation}
      \infer{\CompSt{\proj_i^n(u_1, \ldots, u_m)}{\rho}{v}.}{
        \CompSt{u_1}{\rho}{*} \cdots
        \CompSt{u_{i-1}}{\rho}{*}&
  		 \CompSt{u_i}{\rho}{v}&
       \CompSt{u_{i+1}}{\rho}{*} \cdots
        \CompSt{u_m}{\rho}{*}
      }
    \end{equation}
    applies.
    By assumption, $\vdash_{|B|} \CompSt{ t_i}{ \rho }{v}$.
    Thus, $\vdash_{|B|+m}\langle \proj_i^n(t_1, \ldots, t_m), \rho \rangle
    \downarrow v$. Therefore, the case is valid.

    The case for which $f$ is defined by the composition of $g, h_1, \ldots, h_m$ is presented next.
    Let $\sigma$ be a computation of $\CompSt{g(h_1(\Multi{u}), \ldots, h_m(\Multi{u})))}{\rho}{v}$.  Then, $\sigma$ has the following form.
    \begin{equation}
      \infer{\CompSt{g(h_1(\Multi{u}), \ldots, h_m(\Multi{u}))}{\rho}{v}}{
        \Multi{\beta} &
        \CompSt{h_1(\Multi{u})}{\rho}{w_1}
        & \ldots &
        \CompSt{h_m(\Multi{u})}{\rho}{w_m}
      }
    \end{equation}
    where $\Multi\beta$ is purely numerical.
    Let $v_1 = v(u_1, \rho, \sigma), \ldots, v_n = v(u_n, \rho, \sigma)$ and $\nu(x_1) = v_1, \ldots, \nu(x_n) = v_n$.
    By repeatedly applying Lemma \ref{lem:subst-I}, we obtain a computation $\tau_1$ that has the conclusion $\CompSt{g(h_1(\Multi{x}), \ldots, h_m(\Multi{x}))}{\nu}{v}$.
    Because $v$ is not $*$, $\tau_1$ contains the inference
    \begin{equation}
      \infer{\CompSt{g(h_1(\Multi{x}), \ldots, h_m(\Multi{x}))}{\nu}{v}}{
        \Multi{\gamma} &
        \CompSt{h_1(\Multi{x})}{\nu}{w_1}
        & \ldots &
        \CompSt{h_m(\Multi{x})}{\nu}{w_m}
      }
    \end{equation}
    where $\STEPS{\tau_1} \leq \STEPS{\sigma} + \sum_{j = 1}^m \SIZE{(h_j(\Multi\emptyseq)}$.
    By applying Lemma \ref{lem:subst-I}, we obtain $\delta_1$ that contains the judgment $\CompSt{g(y_1, \ldots, y_m)}{\xi}{v}$, where $\xi(y_1) = w_1, \ldots, \xi(y_m) = w_m$, and satisfies $\STEPS{\delta_1} \leq \STEPS{\tau_1} + \SIZE{g(\Multi\emptyseq)}$.
    $\delta_1$ contains $\CompSt{h_1(\Multi{x})}{\nu}{w_1}, \ldots, \CompSt{h_m(\Multi{x})}{\nu}{w_m}$.
    $\delta_1$ also contains $\CompSt{u_i}{\rho}{v_i}$ for $i = 1, \ldots, m$ unless $v_i$ is $*$.
    If $v_i$ is $*$, we add $\CompSt{u_i}{\rho}{*}$ to $\delta_1$, increasing $\STEPS{\delta_1}$ by at most $n$.
    Then, we obtain $\delta_1$ which satisfies $\STEPS{\delta_1} \leq \STEPS{\sigma} + \sum_{j=1}^m \SIZE{h_j(\Multi\emptyseq)} + \SIZE{g(\Multi\emptyseq)} + n$.
    Using these judgements, we can assemble an inference of the judgement $\CompSt{f(x_1, \ldots, x_n)}{\rho}{v}$.
    \begin{equation}
      \infer{\CompSt{f(u_1, \ldots, u_n)}{\rho}{v}}{
        \CompSt{g(y_1, \ldots, y_m)}{\xi}{v} &
        (\CompSt{h_j(\Multi{x})}{\nu}{w_j})_{j = 1, \ldots, m} &
        (\CompSt{u_i}{\rho}{v_i})_{i = 1, \ldots, n}
      }
    \end{equation}
    Let $\tau$ be the computation that is created in this way.
    \begin{align}
    \STEPS{\tau} &\leq \STEPS{\sigma} + \sum_{j=1}^m \SIZE{h_j(\Multi\emptyseq)} + \SIZE{g(\Multi\emptyseq)} + n + 1 \\
    &\leq \STEPS{\sigma} + \SIZE{g(h_1(\Multi\emptyseq), \ldots, h_m(\Multi\emptyseq))} + m + n + 1 \\
    &\leq \STEPS{\sigma} + \SIZE{f(\Multi{u}) = t}
    \end{align}
    because
    \begin{align}
      \SIZE{g(h_1(\Multi\emptyseq), \ldots, h_m(\Multi\emptyseq))} &\leq \SIZE{t}\\
      m \leq \SIZE{[\Fun, \mathrm{comp}, g, h_1, \ldots, h_m]} &= \SIZE{f}\\
      n + 1 &\leq \SIZE{(u_1, \ldots, u_n)}.
    \end{align}

    The case for which $f$ is defined by recursion using $g_\emptyseq, g_0, g_1$ is presented next.
    For the case of $g_\emptyseq$, the proof is similar to that of the case of the composition.
    Consider the case in which the defining equation is $f(bu_0, \Multi{u}) = g_b(u_0, f(u_0, \Multi{u}), \Multi{u})$.
    Then, there exists a derivation $\sigma$ with the value of $g_b(u_0, f(u_0, \Multi{u}), \Multi{u})$.
    Let $w_0 = v(u_0, \rho, \sigma), w_i = v(u_i, \rho, \sigma), i = 2, \ldots, n$ and $v_0 = v(f(u_0, \Multi{u}), \rho, \sigma)$.
    The environment $\xi$ is defined by $\xi(x_0) = w_0, \xi(x_2) = w_2, \ldots, \xi(x_n) = w_n$ and $\xi(y) = v_0$.
    By Lemma \ref{lem:subst-I}, a computation $\tau$ with the conclusion $\CompSt{g_b(x_0, y, \Multi{x})}{\xi}{v}$ is obtained.
    We can assume that $\tau$ contains $\CompSt{f(u_0, \Multi{u})}{\rho}{v_0}$ as a conclusion by increasing $\STEPS\tau$ by one.
	 The environment $\nu$ is defined by $\nu(x_0) = w_0, \nu(x_2) = w_2, \ldots, \nu(x_n) = w_n$.
    By Lemma \ref{lem:subst-I}, a computation $\mu$ with the conclusion $\CompSt{f(x_0, \Multi{x})}{\nu}{v_0}$ is obtained.
    $\mu$ still contains $\CompSt{g_i(x_0, y, \Multi{x})}{\xi}{v}$.
    Using these judgments, we can assemble a computation $\delta$ of $\CompSt{f(bu_0, \Multi{u})}{\rho}{v}$
    \begin{equation}
      \infer{\CompSt{f(bu_0, \Multi{u})}{\rho}{v}}{
        \CompSt{g(x_0, y, \Multi{x})}{\xi}{v} &
        \infer{\CompSt{b u_0}{\rho}{b v_0}}{\CompSt{u_0}{\rho}{v_0}} &
        \CompSt{f(x_0, \Multi{x})}{\nu}{w_0} &
        (\CompSt{u}{\rho}{w_i})_{i = 1, \ldots, n}
      }
    \end{equation}
    by adding at most $n + 2$ $*$-rules to derive assumptions.
    By summing up,
    \begin{align}
     \STEPS\delta &\leq \STEPS\mu + n + 2 \\
     &\leq \STEPS\tau + n + 2 + \SIZE{f(\Multi\emptyseq)} + 1\\
     &\leq \STEPS\sigma + n + 3 + \SIZE{f(\Multi\emptyseq)} + \SIZE{g_i(\Multi\emptyseq)}\\
		&\leq \STEPS\sigma + \SIZE{f(bu_0, \Multi{u}) = g_b(u_0, f(u_0, \Multi{u}), \Multi{u})}
	 \end{align}
   because
   \begin{align}
     n + 3 &\leq \SIZE{f(bu_0, \Multi{u})}\\
     \SIZE{f(\Multi\emptyseq)} + \SIZE{g_b(\Multi\emptyseq)} &\leq \SIZE{g_b(u_0, f(u_0, \Multi{u}), \Multi{u})} + 1.
   \end{align}
  \end{proof}

\section{Consistency proof}
\label{sec:consistency}

This section proves the consistency of $\PV^-$ inside $S^2_2$.
To this end, we first prove a type of soundness $\PV^-$ by the notion of computation.
We prove that, whenever an equation $t = u$ is proved, for each computation $\sigma$ of $t$ with an environment $\rho$ whose value is $v$, there is a computation $\tau$ of $u$ with the environment $\rho$ whose value is $v^\prime, v^\prime \APPROXEQ v$.
Further, the proof is carried out in $S^2_2$.
The soundness of our semantics implies consistency because the value of $1$ is never $0$ by Lemma \ref{lem:num}; therefore, it is impossible to derive $0 = 1$.
The use of $S^2_2$, and not $S^1_2$, is essential because we need to quantify over a computation $\sigma$ and an environment $\rho$ in the proof of soundness.
This introduces two alternate quantifiers in the induction hypothesis.

\begin{proposition}[$S^2_2$] \label{prop:sound}
  Let $\pi$ be a tree-like $\PV^-$-proof that derives $t=u$.
  Then, for any environment $\rho$ for the free variables of $t$ and $u$ and computation $\sigma \vdash \CompSt{t}{\rho}{v}$, there is a computation $\tau \vdash \CompSt{u}{\rho}{v'}$ such that $v' \APPROXEQ v$,  $\STEPS{\tau} \leq \STEPS{\sigma} + \SIZE{\pi}$.
\end{proposition}

\begin{proof}
  We prove the following claim using induction on a tree-like $\PV^-$-proof $\chi$.
\begin{claim}
  Let $U$ be an integer.
  Let $\chi$ be a tree-like $\PV^-$ proof that derives $r = s$.
  Then, for any
  \begin{itemize}
    \item environment $\rho$ for free variables of $r$ and $s$ such that $B(\rho) \leq \lfloor \frac{1}{2} (U - \SIZE{\chi})^2 \rfloor$ and $L(\rho) \leq U - \SIZE{\chi}$,
    \item computational judgements $\Multi\alpha \equiv \alpha_1, \ldots, \alpha_l$ such that $M(\alpha) \leq U - \SIZE{\chi}$, $B(\alpha) \leq \lfloor \frac{1}{2} (U - \SIZE{\chi})^2 \rfloor$ and $L(\alpha) \leq U - \SIZE{\chi}$,
    \item computation $\sigma \vdash \CompSt{r}{\rho}{v}, \Multi{\alpha}$ such that $\STEPS{\sigma} \leq U - \SIZE{\chi}$,
  \end{itemize}
  there is a computation $\tau \vdash \CompSt{s}{\rho}{v'}, \Multi{\alpha}$ such that $v' \APPROXEQ v$,  $\STEPS{\tau} \leq \STEPS{\sigma} + \SIZE{\chi}$.
\end{claim}
  From the claim, the proposition is immediate by letting $U$ to be sufficiently large.
  The claim is proven by induction on $\chi$.
  Because the induction hypothesis can be written by a formula with bounded universal quantifiers and one bounded existential quantifier inside, the induction hypothesis can be written by a $\Pi^b_2$-formula with two free variable $\chi$ and $U$.
  Therefore, the claim can be proven in $S^2_2$.
  Use of $\Pi^b_2$-PIND is essential because quantification over computations is necessary to interpret the transitivity rule and quantification over environments is necessary to interpret substitution.
  The proof uses case analysis of the last rule of $\chi$.
  Because for $\SIZE{\chi} > U$ the claim vacuously holds, we can assume that $\SIZE{\chi} \leq U$.

  The case for which the conclusion of $\chi$ is a defining axiom is proven using Lemma \ref{lem:defn} and \ref{lem:defn-2}.

  The case for which the axiom is the reflexive axiom is trivial.

  The case for which the last inference of $\chi$ is a symmetry rule is trivial.

  The case for which the last inference of $\chi$ is a transitivity rule is considered next.
  \begin{equation}
    \infer{t = w}{
      \infer*[\chi_1]{t = u}{}
      &
      \infer*[\chi_2]{u = w}{}
    }
  \end{equation}
	Let $\sigma$ be a computation a conclusion of which has a $\CompSt{t}{\rho}{v}$.
	By induction hypothesis, there is a computation $\tau$ of $\CompSt{u}{\rho}{v'}$ such that $\STEPS{\tau} \leq \STEPS{\sigma} + \SIZE{\chi_1}$ and $v' \APPROXEQ v$.
  Because $\STEPS{\tau} \leq U - \SIZE{\chi} + \SIZE{\chi_1} \leq U - \SIZE{\chi_2}$, by induction hypothesis on $\chi_2$, there is a computation $\delta$ of $\CompSt{w}{\rho}{v''}$ such that $\STEPS{\delta} \leq \STEPS{\sigma} + \SIZE{\chi_1} + \SIZE{\chi_2}$ and $v'' \APPROXEQ v$.
  Thus, the claim holds.

  The case in which the last inference of $\chi$ is
  \begin{equation}
    \infer{f(u_1, \ldots, u_n) = f(s_1, \ldots, s_n),}{
      \infer*[\chi_1]{u_1 = s_1}{}
      &
      \cdots
      &
      \infer*[\chi_n]{u_n = s_n}{}
    }
  \end{equation}
  is considered.
  Let $\sigma$ be a computation of $\CompSt{f(u_1, \ldots, u_n)}{\rho}{v}$.
  Let $w_1 = v(u_1, \rho, \sigma), \ldots, w_n = v(u_n, \rho, \sigma)$.
  By Lemma \ref{lem:extract-v}, increasing $\STEPS{\sigma}$ by $n$, we obtain a computation $\sigma_0$ such that $\CompSt{u_1}{\rho}{w_1}, \ldots, \CompSt{u_n}{\rho}{w_n}$ are contained in $\sigma$ as conclusions.
  The $\sigma_0$ satisfies induction hypothesis on $\chi_1$, because
  \begin{align}
    \SIZE{f(u_1, \ldots, u_n)} &\leq \SIZE{\chi} - \SIZE{\chi_1}\\
    &\leq U - \SIZE{\chi_1}\label{eq:sideM}\\
    \STEPS{\sigma} + n &\leq U - \SIZE{\chi} + \SIZE{f(u_1, \ldots, u_n)}\\
    &\leq U - \SIZE{\chi_1}
  \end{align}
  and $\SIZE{u_i} \leq U - \SIZE{\chi_1}$ for $i = 1, \ldots, n$ by the similar reason as \eqref{eq:sideM}.
  Therefore, we can transform $\sigma$ to $\sigma_1$ that has the same conclusions to $\sigma$ except one of $\CompSt{u_1}{\rho}{w_1}$, which is replaced to $\CompSt{s_1}{\rho}{w'_1}$ where $w_1' \APPROXEQ w_1$.
  This increases $\STEPS{\sigma_1}$ by $\SIZE{\chi_1}$.
  Assume that we construct a computation $\sigma_j$ that has the same conclusions to $\sigma$, except $\CompSt{u_i}{\rho}{w_i}, i = 1, \ldots, j$, which is replaced to $\CompSt{s_i}{\rho}{w'_i}$, where $w_i' \APPROXEQ w_i$ and $\STEPS{\sigma_j} \leq \STEPS{\sigma} + n + \sum_{i = 1}^j \SIZE{\chi_i}$.
  Then $\SIZE{f(u_1, \ldots, u_n)} \leq \SIZE{\chi} - \SIZE{\chi_{j+1}} \leq U - \SIZE{\chi_{j+1}}$ and $\STEPS{\sigma_j} \leq U - \SIZE{\chi} + n + \sum_{i = 1}^j \SIZE{\chi_i} \leq U - \SIZE{\chi_{j+1}}$ hold.
  Further, $\SIZE{u_i} \leq U - \SIZE{\chi_{j+1}}$ for $i = 1, \ldots, n$ holds.
  Therefore, we can apply the induction hypothesis on $\chi_{j+1}$ to $\sigma_j$ and obtain $\sigma_{j+1}$ which has the same conclusions to $\sigma$ except $\CompSt{u_i}{\rho}{w_i}, i = 1, \ldots, j+1$, which is replaced to $\CompSt{s_i}{\rho}{w'_i}$ where $w_i' \APPROXEQ w_i$ and $\STEPS{\sigma_j} \leq \STEPS{\sigma} + n + \sum_{i = 1}^{j+1} \SIZE{\chi_i}$.
  Finally, we obtain a computation $\sigma_n$ that has the same conclusions to $\sigma$, except $\CompSt{u_i}{\rho}{w_i}, i = 1, \ldots, n$, which is replaced to $\CompSt{s_i}{\rho}{w'_i}$, where $w_i' \APPROXEQ w_i$ and $\STEPS{\sigma_j} \leq \STEPS{\sigma} + n + \sum_{i = 1}^n \SIZE{\chi_i}$.
  Let $\rho' = \rho[y_1 \mapsto w_1, \ldots, y_n \mapsto w_n]$.
  Because $\sigma_n$ has the conclusion $\CompSt{f(u_1, \ldots, u_n)}{\rho}{v}$, by Lemma \ref{lem:subst-I} we obtain a computation $\tau_1$ of $\CompSt{f(y_1, \ldots, y_n)}{\rho'}{v}$.
  $\tau_1$ contains computation judgements $\CompSt{s_1}{\rho}{w'_1}, \ldots, \CompSt{s_n}{\rho}{w'_n}$ and satisfies $\STEPS{\tau_1} \leq \STEPS{\sigma_n} + \SIZE{f(\emptyseq, \ldots, \emptyseq)}$.
  By Lemma \ref{lem:subst-II}, we obtain a computation $\tau$ of $\CompSt{f(s_1, \ldots, s_n)}{\rho}{v}$.
  \begin{align}
    &\STEPS{\tau}\\
    &\leq \STEPS{\sigma} + n + \sum_{i = 1}^n \SIZE{\chi_i} + 2 \SIZE{f(\emptyseq, \ldots, \emptyseq)}\\
    &\leq \STEPS{\sigma} + \sum_{i = 1}^n \SIZE{\chi_i} + \SIZE{f(u_1, \ldots, u_n) = f(s_1, \ldots, s_n)} \\
    &\leq \STEPS{\sigma} + \SIZE{\chi}.
  \end{align}
  Therefore, the claim holds.

  Finally, we consider the substitution rule.
  \begin{equation}
    \infer{r_0(q) = s_0(q)}{
      \infer*[\chi_1]{r_0(x) = s_0(x)}{}
    }
  \end{equation}
  Let $\sigma$ be a computation of $\CompSt{r_0(q)}{\rho}{v}$ that satisfies the conditions of the proposition.
  Let $w = v(q, \rho, \sigma)$ and $\rho' = \rho[x \mapsto w]$.
  $L(\rho') \leq L(\rho) + 1 \leq U - \SIZE{\chi} + 1 \leq U - \SIZE{\chi_1}$.
  \begin{align}
    B(\rho') &\leq \max (B(\rho), \STEPS{w})\\
    &\leq \max(B(\rho), \max(B(\sigma), M(\sigma)) + \STEPS{\sigma})\\
    &\leq \max(\lfloor \frac{1}{2} (U - \SIZE{\chi})^2 \rfloor, U - \SIZE{\chi}) + U - \SIZE{\chi}\\
    &\leq \lfloor \frac{1}{2} (U - \SIZE{\chi_1})^2 \rfloor
  \end{align}
  By increasing $\STEPS{\sigma}$ by 1, we can assume that $\sigma$ contains $\CompSt{q}{\rho}{w}$ as a conclusion.
  By Lemma \ref{lem:subst-I}, there is a computation $\sigma_1$ that derives $\CompSt{r_0(x)}{\rho'}{v}$ such that $\STEPS{\sigma_1} \leq \STEPS{\sigma} + 1 + \SIZE{r_0(\emptyseq)}$.
  It is easy to see that $\sigma_1$ satisfies assumptions of induction hypothesis for $\chi_1$.
  Therefore, there is a computation $\tau_1$ of $\CompSt{s_0(x)}{\rho'}{v'}$ such that $v' \APPROXEQ v$ and $\STEPS{\tau_1} \leq \STEPS{\sigma_1} + \SIZE{\chi_1}$.
  Finally, because the conclusion $\CompSt{q}{\rho}{w}$ is preserved by all operations above, $v(q, \rho, \tau_1) \APPROXEQ w$ holds.
  By Lemma \ref{lem:subst-II}, there is a computation $\tau$ of $\CompSt{s_0(q)}{\rho}{v'}$ such that $\STEPS{\tau} \leq \STEPS{\tau_1} + \SIZE{s_0(\emptyseq)}$ and $v' \APPROXEQ v$.
  \begin{align}
    \STEPS{\tau} &\leq \STEPS{\tau_1} + \SIZE{s_0(\emptyseq)}\\
    &\leq \STEPS{\sigma_1} + \SIZE{\chi_1} + \SIZE{s_0(\emptyseq)}\\
    &\leq \STEPS{\sigma} + 1 + \SIZE{r_0(\emptyseq)} + \SIZE{\chi_1} + \SIZE{s_0(\emptyseq)}\\
    &\leq \STEPS{\sigma} + \SIZE{\chi}
  \end{align}
  Therefore, the claim holds.
	\end{proof}

  \begin{theorem}\label{cor:consis}
    $S^{2}_2$ proves $\PV^- \not\vdash 0\emptyseq = 1\emptyseq \label{eq:consis}$
  \end{theorem}

  \begin{proof}
    Assume that there is a proof $\pi$ of $0\emptyseq = 1\emptyseq$ in $\PV^-$.
    Let $\sigma$ be a computation of $\CompSt{0\emptyseq}{\EMPTYSEQ }{0\emptyseq}$.
    By Proposition \ref{prop:sound}, there is a computation $\tau$ of $\CompSt{1\emptyseq}{\EMPTYSEQ}{0\emptyseq}$, which contradicts Lemma \ref{lem:num}.
  \end{proof}

\section{Discussion}\label{sec:discuss}

\subsection{Relation to original PV}\label{subsec:original-pv}
Cook and Urquhart's original $\PV$~\cite{cook1975feasibly} has some differences from our $\PV$.

Their $\PV$ uses lambda abstraction to create new function symbols from terms.
Because Cook and Urquhart's $\PV$ uses only lambda abstraction for first-order variables, the functions that are defined by lambda abstraction can be defined by compositions, projections, and constant functions.

Another difference is that the intended domain of Cook and Urquhart's $\PV$ is the set of natural numbers.
Natural numbers are represented by the constant $0$ and binary successors $s_0, s_1$ of which the intended meaning is $2 \cdot x$ and $2 \cdot x + 1$, respectively.
On the other hand, our formalism uses the set of binary strings as the intended domain.
Our system can interpret natural numbers by using a binary number system, using little endian (the least significant bit appears at the right most position).
Then, using our system, we can define all polynomial time functions.
However, the schema of limited recursion on notation
\begin{align}\label{eq:limited-rec}
  R[g, h, k](x, \Multi{y}) &= \Cond(x, g(\Multi{y}), Cond(t \minusdot k(x, \Multi{y}), t, k(x, \Multi{y}))) \\ \notag
  t &\equiv h(x, \Multi{y}, R[g, h, k](\divtwo{x}, \Multi{y}))
\end{align}
which appears in Cook and Urquhart's $\PV$ would not be derived by our system.
This is because to derive \eqref{eq:limited-rec}, it appears that the case analysis on $x$, which does not seem to be derived from our system, is required.

\subsection{Beckmann's counter-example}\label{subsec:Beckmann}

The proof in the previous draft \cite{Yamagata2014} allows a counter-example, which was pointed out by Arnold Beckmann \cite{Beckmann2015}.
Let $g(x)$ be the function defined by $g(\emptyseq) = \emptyseq, g(0x) = \overbrace{0 \cdots 0}^k g(x), k \geq 1$.
$h(x)$ is defined recursively by $h(\emptyseq)=\emptyseq, h(0x) = \emptyseq(x, h(x))$.
Then, for any numeral $n$, we have the $\PV^-$-proof of $h(g(0 n)) = \emptyseq$, whose length is constant.
\begin{align}
  h(g(0n)) &= h(\overbrace{0 \cdots 0}^{k} g(n)) \\
  &= \emptyseq^2(\overbrace{0 \cdots 0}^{k-1} g(n), h(\overbrace{0 \cdots 0}^{k-1} g(n))) \label{eq:counter-example-rec}\\
  &= \emptyseq \label{eq:counter-example-epsilon}
\end{align}
However, the computation of $h(g(0n))$, which is defined in \cite{Yamagata2014}, becomes
\begin{equation}\label{eq:counter-example}
  \infer{\CompSt{h(g(0n))}{\rho}{v}}{
    \ldots &
    \CompSt{g(0 n)}{\rho}{v_0}
  }
\end{equation}
and the length of the computation of $g(0n)$ rapidly increases depending on $n$.
Because $\emptyseq$ can be computed by a computation with a constant length, this contradicts Proposition 1 of \cite{Yamagata2014}.

This indicates that there is a gap in the proof of \cite{Yamagata2014}.
Indeed, the computation of $\emptyseq(0 \cdots 0 g(n), h(0 \cdots 0 g(n)))$ does not have a form such as (92) in the proof of Lemma 14 in \cite{Yamagata2014}, because it contains computations neither for $0 \cdots 0 g(n)$ nor for $h(0 \cdots 0 g(n))$.

In this paper, we reformulate the computation rules such that their forms have greater uniformity.
Therefore, to compute $\emptyseq(0 \cdots 0 g(n), h(0 \cdots 0 g(n)))$, we need to compute $0 \cdots 0 g(n)$ and $h(0 \cdots 0 g(n))$.
Thus, Proposition \ref{prop:sound} holds for the equality \eqref{eq:counter-example-rec}.
However, to ensure that Proposition \ref{prop:sound} holds for the equality \eqref{eq:counter-example-epsilon}, we introduce approximate computations, in which the value can be approximated by $*$.
By evaluating $0 \cdots 0 g(n)$ and $h(0 \cdots 0 g(n))$ to $*$, the number of steps of the computation of $\emptyseq(0 \cdots 0 g(n), h(0 \cdots 0 g(n)))$ can be bounded by a constant.
Then, instead of \eqref{eq:counter-example}, we use the computation that has a constant size.
\begin{equation}
  \infer{\CompSt{h(g(0n))}{\rho}{\emptyseq}}{
    \infer{\CompSt{\emptyseq^2(x, h(y))}{[x, y \mapsto *]}{\emptyseq}}{
      \CompSt{x}{[x, y \mapsto *]}{*}&
      \CompSt{h(y)}{[x, y \mapsto *]}{*}
    }
     &
    \infer={\CompSt{g(0 n)}{\rho}{0 *}}{
      \infer={\CompSt{0 \cdots 0 x}{[x \mapsto *]}{0 *}}{} &
      \CompSt{g(n)}{\rho}{*}
    }
  }
\end{equation}
where $[x, y \mapsto *]$ denotes the environment that assigns $*$ to $x$ and $y$.
$[x \mapsto *]$ has a similar meaning.
Thus, Proposition \ref{prop:sound} holds for \eqref{eq:counter-example-epsilon}.

\subsection{Meta-theories}\label{subsec:meta}

In this paper, we strengthen the meta-theory from $S^1_2$, which is claimed to be sufficient in the previous draft \cite{Yamagata2014}, to $S^2_2$.
This is because the proof of Lemma \ref{lem:subst-I}, \ref{lem:subst-II} and Proposition \ref{prop:sound} requires $\Pi^b_2 - \PIND$.
The reason for Lemma \ref{lem:subst-I} and \ref{lem:subst-II} is that the conclusions of a computation used for induction step change and their number increases. The reasons for Proposition \ref{prop:sound} are the transitivity and substitution rules.
To interpret the transitivity rule, the induction hypothesis must hold for all computations with certain conditions.
Similarly, to interpret substitution, the induction hypothesis must hold for all environments with certain conditions.
Therefore, the induction hypothesis has universal quantifiers in the outmost position.
Further, the induction hypothesis claims that for each computation of the term in the left-hand side of the conclusion, there is a computation of the term in the right-hand side of the conclusion.
Therefore, induction hypothesis becomes $\Pi^b_2$.

\subsection{Relation to result of Buss and Ignjatovi\'c}\label{sec:buss-ignja}

This paper presents proof that Buss's $S^2_2$ is capable of proving the consistency of purely equational $\PV^-$, which is obtained by removing induction from $\PV$ of Cook and Urquhart but retaining the substitution rule.
Because Buss and Ignjatovi\'c stated that this is impossible in $S^1_2$, at first glance, it implies that $S^1_2 \subsetneq S^2_2$.
However, this is not the case.

Although they stated that $S^1_2$ cannot prove the consistency of purely equational $\PV^-$, what they actually prove is that $S^1_2$ cannot prove the consistency of $\PV^-$, which is extended by propositional logic and $BASIC^e$ axioms.
According to them, we can obtain the same unprovability for purely equational $\PV^-$ by translating propositional connectives into numerical functions.
For example, $t = u$ is translated into $\Eq(t, u)$, where the function $\Eq$ is defined as
\begin{equation}
  \Eq(t, u) = \begin{cases}
    0 & \text{if $t = u$}\\
    1 & \text{otherwise,}
\end{cases}
\end{equation}
and $p \vee q$ into $p \cdot q$, etc.
Then, every proposition $p$ is translated to a numerical term $t_p$.
Then, they assume that whenever a proposition $p$ is proved by $\PV^-$, which is extended with propositional logic and $BASIC^e$, $t_p = 0$ can be proved in purely equational $\PV^-$.
 However, although such translation is possible in $\PV$ \cite{Cook1993}, it depends on the existence of induction.
For example, the reflexive law $x = x$ is translated into $\Eq(x, x) = 0$.
It is impossible to derive the latter from the former without using induction.
Therefore, we cannot conclude that the consistency of $\PV^-$ with propositional logic and $BASIC^e$ axioms from the consistency of purely equational $\PV^-$ in $S^1_2$.
Thus, our result does not appear to imply that $S^1_2 \subsetneq S^2_2$.

One possible way to prove that $S^1_2 \subsetneq S_2$ would be to prove the consistency of $\PV^-$ with propositional logic and $BASIC^e$ axioms in $S_2$, which is the system considered by Buss and Ignjatovi\'c.
However, because our method relies on the fact that $\PV^-$ is formulated as an equational theory, our method cannot be extended to $\PV^-$ with propositional logic and $BASIC^e$ axioms.
Thus, as a long-term goal, it would be interesting to develop a technique to prove the consistency of such a system in bounded arithmetic.

\begin{ack}
  The author is grateful to Arnold Beckmann for pointing out a counter-example in the previous draft.
  The author is also grateful to Toshiyasu Arai, Satoru Kuroda, Jun Inoue, Izumi Takeuti, and Kazushige Terui for insightful discussions and comments.
  The comments of an anonymous referee in response to the previous submission were also very helpful in improving the paper.
  We would like to thank Editage (www.editage.jp) for English-language editing.
  This work was partially supported by the Research Institute for Mathematical Sciences, a Joint Usage/Research Center located in Kyoto University.
\end{ack}

% \bib, bibdiv, biblist are defined by the amsrefs package.
\begin{bibdiv}
\begin{biblist}

\bib{Beckmann2002}{article}{
      author={Beckmann, A.},
       title={{Proving consistency of equational theories in bounded
  arithmetic}},
        date={2002},
        ISSN={0022-4812},
     journal={Journal of Symbolic Logic},
      volume={67},
      number={1},
       pages={279\ndash 296},
         url={http://projecteuclid.org/euclid.jsl/1190150044},
}

\bib{Beckmann2015}{misc}{
      author={Beckmann, A.},
        date={2015},
        publisher={Personal Communication}
}

\bib{Buss}{book}{
      author={Buss, S.~R.},
       title={{Bounded Arithmetic}},
   publisher={Bibliopolis},
     address={Naples},
        date={1986},
}

\bib{BussUnprovability}{article}{
      author={Buss, S.~R.},
      author={Ignjatovi{\'{c}}, A.},
       title={{Unprovability of consistency statements in fragments of bounded
  arithmetic}},
        date={1995},
     journal={Annals of pure and applied Logic},
      volume={74},
       pages={221\ndash 244},
         url={http://dx.doi.org/10.1016/0168-0072(94)00049-9},
}

\bib{Cook1993}{article}{
      author={Cook, S.},
      author={Urquhart, A.},
       title={{Functional interpretations of feasibly constructive
  arithmetic}},
        date={1993},
        ISSN={01680072},
     journal={Annals of Pure and Applied Logic},
      volume={63},
      number={2},
       pages={103\ndash 200},
         url={http://linkinghub.elsevier.com/retrieve/pii/ 016800729390044E},
}

\bib{cook1975feasibly}{inproceedings}{
      author={Cook, Stephen~A},
       title={Feasibly constructive proofs and the propositional calculus
  (preliminary version)},
organization={ACM},
        date={1975},
   booktitle={Proceedings of seventh annual {ACM} symposium on theory of
  computing},
       pages={83\ndash 97},
}

\bib{Kahn1987}{inproceedings}{
      author={Kahn, G.},
       title={Natural semantics},
        date={1987},
   booktitle={Annual symposium on theoretical aspects of computer science},
   publisher={Springer},
       pages={22\ndash 39},
}

\bib{Plotkin1981}{techreport}{
      author={Plotkin, G.~D.},
       title={A structural approach to operational semantics},
 institution={Computer Science Department, Aarhus University Denmark},
        date={1981},
}

\bib{PudlakNote}{article}{
      author={Pudl{\'{a}}k, P.},
       title={{A note on bounded arithmetic}},
        date={1990},
     journal={Fundamenta mathematicae},
      volume={136},
       pages={85\ndash 89},
         url={http://matwbn.icm.edu.pl/ksiazki/fm/fm136/fm136110.pdf},
}

\bib{takeuti1990some}{incollection}{
      author={Takeuti, G.},
       title={Some relations among systems for bounded arithmetic},
        date={1990},
   booktitle={Mathematical logic},
   publisher={Springer},
       pages={139\ndash 154},
}

\bib{Wilkie1987Scheme}{article}{
      author={Wilkie, A.},
      author={Paris, J.},
       title={{On the scheme of induction for bounded arithmetic formulas}},
        date={1987},
     journal={Annals of pure and applied Logic},
      volume={35},
       pages={261\ndash 302},
         url={http://dx.doi.org/10.1016/0168-0072(87)90066-2},
}

\bib{Yamagata2014}{unpublished}{
      author={Yamagata, Y.},
       title={{Consistency proof of a feasible arithmetic inside a bounded
  arithmetic}},
        date={2014},
         url={http://arxiv.org/abs/1411.7087v2},
        note={arXiv:1411.7087v2},
}

\end{biblist}
\end{bibdiv}
\end{document}